\numberwithin{equation}{section}
\newtheorem{thm}{Theorem}[section]
\newtheorem{prop}[thm]{Proposition}
\newtheorem{lem}[thm]{Lemma}
\newtheorem{cor}[thm]{Corollary}
\newtheorem{rmk}[thm]{Remark}
\theoremstyle{definition}
\newcommand{\nat}{\mathbb{N}}
\newcommand{\real}{\mathbb{R}}
\newcommand{\complex}{\mathbb{C}}
\newcommand{\eps}{\varepsilon}
\newcommand{\E}{\mathcal{E}}
\newcommand{\G}{\mathcal{G}}
\newcommand{\op}{\mathcal{H}}
\newcommand{\dif}{\mathrm{d}}
\DeclareMathOperator{\Dom}{Dom}
\DeclareMathOperator{\divergence}{div}
\newcommand{\HartreeDelta}{
i \partial_t \psi
=
- \Delta_x \psi
+
\alpha \delta_0 \psi
-
\parens{I_\beta \ast \abs{\psi}^p} \abs{\psi}^{p - 2} \psi
}
\DeclarePairedDelimiter{\abs}{\lvert}{\rvert}
\DeclarePairedDelimiter{\norm}{\lVert}{\rVert}
\DeclarePairedDelimiter{\parens}{(}{)}
\DeclarePairedDelimiter{\set}{\{}{\}}
\DeclarePairedDelimiter{\brackets}{\lbrack}{\rbrack}
\DeclarePairedDelimiter{\angles}{\langle}{\rangle}
\DeclarePairedDelimiter{\cci}{\lbrack}{\rbrack}
\DeclarePairedDelimiter{\coi}{\lbrack}{\lbrack}
\DeclarePairedDelimiter{\ooi}{\rbrack}{\lbrack}
\title[Hartree-type equations in $\real^3$ with a delta potential]{On the existence of ground states to Hartree-type equations in $\real^3$ with a delta potential}
\begin{document}

\author{Gustavo de Paula Ramos}
\address{Instituto de Matemática e Estatística, Universidade de São Paulo, Rua do Matão, 1010, 05508-090 São Paulo SP, Brazil}
\email{gpramos@ime.usp.br}
\urladdr{http://gpramos.com}

\begin{abstract}
Consider the Hartree-type equation in $\real^3$ with a delta potential formally described by
\[\HartreeDelta\]
where $\alpha \in \real$; $0 < \beta < 3$ and we want to solve for
$\psi \colon \real^3 \times \real \to \complex$. By means of a Pohožaev identity, we show that if $p = \parens{3 + \beta} / 3$ and $\alpha \geq 0$, then the problem has no ground state at any mass $\mu > 0$. We also prove that if
\[
\frac{3 + \beta}{3}
<
p
<
\min \parens*{
	\frac{5 + \beta}{3}, \frac{5 + 2 \beta}{4}
},
\]
which includes the physically-relevant case $p = \beta = 2$, then the problem admits a ground state at any mass $\mu > 0$.

\smallskip
\noindent \textbf{Keywords.}
Hartree equation, Schrödinger--Newton equation, delta potential, ground states, Choquard equation, normalized solutions, Pohožaev identity, nonlocal semilinear elliptic problem.
\end{abstract}

\date{\today}
\maketitle

\section{Introduction}

\subsection{Context and motivation}

Consider the Hartree-type equation in $\real^3$ with a delta potential, a.k.a. \emph{delta interaction}, \emph{point interaction} or \emph{point defect}, formally described by
\begin{equation}
\label{intro:eqn:HartreeDelta}
\HartreeDelta,
\end{equation}
where $p > 1$; $\alpha \in \real$; $0 < \beta < 3$;
$I_\beta \colon \real^3 \setminus \set{0} \to \real$ denotes the \emph{Riesz potential} given by
\[
I_\beta \parens{x}
:=
\frac{1}{\Gamma \parens{\beta / 2} \pi^{3 / 2} 2^\beta}
\Gamma \parens*{\frac{3 - \beta}{2}}
\frac{1}{\abs{x}^{3 - \beta}}
\]
and we want to solve for
$\psi \colon \real^3 \times \real \to \complex$.

The \emph{Hartree-type equation}
\begin{equation}
\label{intro:eqn:Hartree}
i \partial_t \psi
=
-
\Delta_x \psi
-
\parens*{I_\beta \ast \abs{\psi}^p} \abs{\psi}^{p - 2} \psi
\end{equation}
appears in physics as a model for self-interaction in a number of different contexts. The case $p = \beta = 2$ is specially important, being used, for instance, to model self-gravitating boson stars, in which context it is usually called the \emph{Hartree equation} or \emph{Schrödinger--Newton equation}. For more details about related physical models, we refer the reader to \cite[Section 2]{morozGuideChoquardEquation2017}.

Over the last years, there has been an increasing interest for semilinear PDEs of second order with a delta potential in the whole space $\real^3$. Let us begin by describing recent developments for these equations in presence of local nonlinearities. The evolution problem for the NLSE formally described by
\[
\begin{cases}
i \partial_t \psi
=
-\Delta_x \psi + \alpha \delta_0 \psi \pm \abs{\psi}^{r - 1} \psi;
\\
\psi \parens{\cdot, 0} = \psi_0
\end{cases}
\]
with $2 < r < 3$ was considered in \cite{cacciapuotiWellPosednessNonlinear2021, cacciapuotiFailureScatteringNLSE2023}, while the existence and radiality of ground states in the focusing case
\[
i \partial_t \psi
=
-\Delta_x \psi + \alpha \delta_0 \psi - \abs{\psi}^{r - 1} \psi
\]
was established in \cite{adamiExistenceStructureRobustness2022}.

As for nonlocal nonlinearities, the Cauchy problem in $\real^3$ formally described by
\[
\begin{cases}
i \partial_t \psi
=
-
\Delta_x \psi
+
\alpha \delta_0 \psi
+
\parens*{w \ast \abs{\psi}^2} \psi;
\\
\psi \parens{0, \cdot} = \psi_0
\end{cases}
\]
was recently addressed in \cite{michelangeliSingularHartreeEquation2021, dugandzijaSingularSolutionHartree2023}, both considering the case $\alpha \geq 0$. In \cite{michelangeliSingularHartreeEquation2021}, Michelangeli, Olgiati \& Scandone gave sufficient conditions for local/global well-posedness in different functional spaces according to the regularity of $w$, including $L^2$ or $H^s$ for
$0 < s < 1 / 2$, (see \cite[Theorems 1.1--1.6]{michelangeliSingularHartreeEquation2021}). In \cite{dugandzijaSingularSolutionHartree2023}, Dugandžija and Vojnović used the framework of Colombeau algebras to obtain a similar result when $a :=  \psi_0 \in H^2 \cap H^2_\alpha$ is odd and $w \in L^\infty$ is even, where $H^2_\alpha$ denotes a fractional perturbed Sobolev space.

To the best of our knowledge, there are no studies establishing sufficient conditions for the existence of ground states of \eqref{intro:eqn:HartreeDelta} in $\real^3$, hence the motivation for this note. We remark that the situation of stationary states of \eqref{intro:eqn:HartreeDelta} in $\real^2$ under the hypothesis
$p = \beta = 2$ and a previously known frequency $\omega$ was studied by Georgiev, Michelangeli \& Scandone in \cite{georgievStandingWavesGlobal2022}, which is different from the problem considered in this note.

\subsection{Technical framework}

Let us recall the setting considered in \cite{adamiExistenceStructureRobustness2022} under which we can define the rigorous version of \eqref{intro:eqn:HartreeDelta}.

Given
$\lambda > 0$, we let $\G_\lambda$ denote the fundamental solution of $-\Delta + \lambda$, which is given by
\[
\G_\lambda \parens{x}
=
\frac{1}{4 \pi \abs{x}} e^{- \sqrt{\lambda} \abs{x}}
\]
for every $x \in \real^3 \setminus \set{0}$. Notice that
$\G_\lambda \not \in H^1$ and
\begin{equation}
\label{intro:eqn:integrabilityOfGlambda}
\G_\lambda \in L^r
\quad \quad \text{if, and only if,} \quad \quad
1 \leq r < 3.
\end{equation}
Suppose that $\lambda \neq \mu$ in $\ooi{0, \infty}$. An explicit computation shows that $\G_\lambda - \G_\mu \in H^1$ (for the explicit expression of $\norm{\G_\lambda - \G_\mu}_{H^1}$, see Lemma \ref{prelim:lem:Glambda}) and
\[
\Delta \parens{\G_\lambda - \G_\mu}
=
\lambda \parens{\G_\lambda - \G_\mu}
\quad \text{in} \quad
L^2,
\]
so $H^2$-elliptic regularity implies
\begin{equation}
\label{intro:eqn:GlambdaMinusGnuIsInH2}
\G_\lambda - \G_\mu \in H^2
\quad \quad \text{for every} \quad \quad
\lambda \neq \mu
\quad \quad \text{in} \quad \quad
\ooi{0, \infty}.
\end{equation}

The operator $\op \colon \Dom \parens{\op} \to L^2$ will denote the self-adjoint realization of $- \Delta + \alpha \delta_0$, which acts as
\[
\op \brackets{\phi + q \G_\lambda}
=
- \Delta \phi - q \lambda \G_\lambda
\]
for every $\parens{\phi, q, \lambda} \in T$, where
\[
T := \set*{
	\parens{\phi, q, \lambda}
	\in
	H^2 \times \complex \times \ooi{0, \infty}:
	\phi \parens{0}
	=
	\parens*{
		\alpha + \frac{\sqrt{\lambda}}{4 \pi}
	}
	q
}
\]
and
\[
\Dom \parens{\op}
:=
\set*{
	\phi + q \G_\lambda : \parens{\phi, q, \lambda} \in T
}
\subset
L^2.
\]
In contrast with the embeddings $H^1 \hookrightarrow L^r$ for $2 \leq r \leq 6$, it follows from \eqref{intro:eqn:integrabilityOfGlambda} that $\Dom \parens{\op} \subset L^r$ if, and only if, $2 \leq r < 3$.

Notice that the mapping
\[
T \ni \parens{\phi, q, \lambda}
\mapsto
\phi + q \G_\lambda \in \Dom \parens{\op}
\]
is not injective. More precisely, it follows from \eqref{intro:eqn:GlambdaMinusGnuIsInH2} that given
$u \in \Dom \parens{\op}$, there exists a unique $q \in \complex$, called the \emph{charge} of $u$, for which we can associate each
$\lambda > 0$ to a $\phi_\lambda \in H^2$ such that
$u = \phi_\lambda + q \G_\lambda$. This nonuniqueness of representation of functions $u \in \Dom \parens{\op}$ in terms of triples $\parens{\phi, q, \lambda} \in T$ will play a major role in the arguments.

If
\begin{equation}
\label{intro:eqn:conditionForChoquardL2}
\frac{3 + \beta}{3}
\leq
p
<
\frac{5 + 2 \beta}{4},
\end{equation}
then $\parens{I_\beta \ast \abs{u}^p} \abs{u}^{p - 2} u \in L^2$
for every $u \in X$ (see Lemma \ref{prelim:lem:conditionForIntegrability}) and we define the rigorous version of \eqref{intro:eqn:HartreeDelta} as
\begin{equation}
\label{intro:eqn:RigorousHartreeDelta}
i \partial_t \psi
=
\op \psi
-
\parens{I_\beta \ast \abs{\psi}^p} \abs{\psi}^{p - 2} \psi.
\end{equation}
We then say that $u \in \Dom \parens{\op}$ is a \emph{bound state} of \eqref{intro:eqn:HartreeDelta} with frequency $\omega \in \real$ when
\begin{equation}
\label{intro:eqn:boundStateHartreeDelta}
\op u + \omega u - \parens{I_\beta \ast \abs{u}^p} \abs{u}^{p - 2} u
=
0
\quad \text{in} \quad L^2,
\end{equation}
in which case $\psi \parens{x, t} := u \parens{x} e^{i \omega t}$ satisfies \eqref{intro:eqn:RigorousHartreeDelta} in $L^2$.

The operator $\op$ is canonically associated with the quadratic form $Q \colon X \to \real$ given by
\[
Q \parens{u}
:=
\norm{\phi}_{D^{1, 2}}^2
+
\lambda \parens*{\norm{\phi}_{L^2}^2 - \norm{u}_{L^2}^2}
+
\parens*{\alpha + \frac{\sqrt{\lambda}}{4 \pi}} \abs{q}^2
\]
for every $u := \phi + q \G_\lambda \in X$, where
\[
\Dom \parens{\op}
\subset
X
:=
\set*{
	\phi + q \G_\lambda:
	\parens{\phi, q, \lambda}
	\in
	H^1 \times \complex \times \ooi{0, \infty}
}
\subset
L^2.
\]
Notice that
\[
\inf_{u \in X \setminus \set{0}}
\frac{Q \parens{u}}{\norm{u}_{L^2}^2}
=
\begin{cases}
- \parens{4 \pi \alpha}^2, 	&\text{if} ~ \alpha < 0;
\\
0,								&\text{if} ~ \alpha \geq 0.
\end{cases}
\]

An application of the HLS Inequality shows that if
\begin{equation}
\label{intro:eqn:conditionWellDefinedEnergy}
\frac{3 + \beta}{3}
\leq
p
<
\frac{3 + \beta}{2},
\end{equation}
then
\[
N \parens{u}
:=
\int \parens*{\parens{I_\beta \ast \abs{u}^p} \abs{u}^p}
<
\infty
\]
for every $u \in X$. Under this condition, we have a well-defined \emph{energy functional} $E \colon X \to \real$ given by
\[
E \parens{u}
:=
\frac{1}{2} Q \parens{u}
-
\frac{1}{2 p} N \parens{u}.
\]
Notice that $E$ extends the more usual energy functional
$E^0 \colon H^1 \to \real$
given by
\[
E^0 \parens{\phi}
=
\frac{1}{2} \norm{\phi}_{D^{1,2}}^2
-
\frac{1}{2 p} N \parens{\phi}.
\]

Under condition \eqref{intro:eqn:conditionForChoquardL2}, critical points of $E|_{X_\mu}$ are actually bound states of \eqref{intro:eqn:RigorousHartreeDelta}, where
\[X_\mu := \set*{v \in X : \norm{v}_{L^2}^2 = \mu}.\]
Indeed, if $u := \phi + q \G_\lambda \in X_\mu$ is a critical point of $E|_{X_\mu}$ and $\omega$ is its Lagrange multiplier, that is, $E' \parens{u} = - \omega \angles{\cdot, u}_{L^2}$, then it follows from $H^2$-elliptic regularity that
$\phi \in H^2$. We can then verify that the boundary condition is satisfied by expanding
\[
\angles*{
	-
	\Delta \phi
	-
	q \lambda \G_\lambda
	+
	\omega u
	-
	\parens*{I_\beta \ast \abs{u}^p} \abs{u}^{p - 2} u,
	\G_\lambda
}_{L^2}
=
0.
\]

In this context, we say that $u$ is a \emph{ground state} of \eqref{intro:eqn:RigorousHartreeDelta} at mass $\mu$ when $u$ is a minimizer of $E|_{X_\mu}$ and we denote the infima of the energy under the mass constraint as
\[
\E \parens{\mu} := \inf_{v \in X_\mu} E \parens{v}
\quad \quad \text{and} \quad \quad
\E^0 \parens{\mu} := \inf_{\phi \in H^1_\mu} E^0 \parens{\phi},
\]
where
$
H^1_\mu := \set*{\phi \in H^1 : \norm{\phi}_{L^2}^2 = \mu}
$.

\subsection{Main results}

Before stating our first result, we recall the Pohožaev identity for bound states of \eqref{intro:eqn:Hartree} which follows from \cite[Proposition 3.1]{morozGroundstatesNonlinearChoquard2013}.

\begin{prop}
\label{intro:prop:PohozaevHartree}
If $\phi \in H^1 \cap L^{\frac{6 p}{3 + \beta}}$ is a bound state of \eqref{intro:eqn:Hartree} and
$\phi \in H^2_{loc} \cap W^{1, \frac{6p}{3 + \beta}}$, then the following Pohožaev identity holds:
\begin{equation}
\label{intro:eqn:PohozaevHartree}
\frac{1}{2} \norm{\phi}_{D^{1, 2}}^2
+
\frac{3}{2} \omega \norm{\phi}_{L^2}^2
=
\frac{3 + \beta}{2 p} N \parens{\phi}.
\end{equation}
\end{prop}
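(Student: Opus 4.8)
The plan is to obtain \eqref{intro:eqn:PohozaevHartree} as the dilation-invariance identity associated with \eqref{intro:eqn:Hartree}. A bound state $\phi$ of \eqref{intro:eqn:Hartree} with frequency $\omega$ solves
\[
- \Delta \phi + \omega \phi = \parens*{I_\beta \ast \abs{\phi}^p} \abs{\phi}^{p - 2} \phi \quad \text{in} \quad L^2,
\]
so it is a critical point of the action $S \parens{u} := \frac{1}{2} \norm{u}_{D^{1, 2}}^2 + \frac{\omega}{2} \norm{u}_{L^2}^2 - \frac{1}{2 p} N \parens{u}$. Under the dilation $\phi_\sigma \parens{x} := \phi \parens{x / \sigma}$ the three terms scale as $\sigma^1$, $\sigma^3$ and $\sigma^{3 + \beta}$ respectively, the last exponent reflecting that $I_\beta$ is homogeneous of degree $\beta - 3$. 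Differentiating $S \parens{\phi_\sigma}$ at $\sigma = 1$ formally produces $\frac{1}{2} \norm{\phi}_{D^{1, 2}}^2 + \frac{3}{2} \omega \norm{\phi}_{L^2}^2 - \frac{3 + \beta}{2 p} N \parens{\phi}$, which must vanish since $S' \parens{\phi} = 0$. This heuristic pins down the identity and the exact numerical coefficients.

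To make this rigorous I would test the equation directly against the dilation generator $x \cdot \nabla \phi$ (taking real parts), inserting a cutoff $\zeta_R \parens{x} = \zeta \parens{x / R}$ to license the integrations by parts before letting $R \to \infty$. The Laplacian term gives $\int \parens{- \Delta \phi} \parens{x \cdot \nabla \phi} = - \frac{1}{2} \norm{\phi}_{D^{1, 2}}^2$ and the linear term gives $\int \omega \phi \parens{x \cdot \nabla \phi} = - \frac{3}{2} \omega \norm{\phi}_{L^2}^2$, both using $\int x \cdot \nabla h = - 3 \int h$ in $\real^3$ once the boundary contributions are shown to vanish. For the nonlocal term one writes $\mathrm{Re} \parens{\abs{\phi}^{p - 2} \bar\phi \parens{x \cdot \nabla \phi}} = \frac{1}{p} x \cdot \nabla \abs{\phi}^p$, integrates by parts once more, and combines Euler's relation $z \cdot \nabla I_\beta \parens{z} = - \parens{3 - \beta} I_\beta \parens{z}$ with a symmetrization of the resulting double integral to obtain $\int \parens{I_\beta \ast g} \parens{x \cdot \nabla g} = - \frac{3 + \beta}{2} N \parens{\phi}$, where $g := \abs{\phi}^p$. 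Assembling the three contributions yields \eqref{intro:eqn:PohozaevHartree}.

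The main obstacle is not the algebra but the justification of these manipulations under low regularity, and this is exactly where the hypotheses $\phi \in H^1 \cap L^{\frac{6 p}{3 + \beta}}$ and $\phi \in H^2_{\loc} \cap W^{1, \frac{6 p}{3 + \beta}}$ enter. The $H^2_{\loc}$ regularity renders $\Delta \phi \parens{x \cdot \nabla \phi}$ locally integrable, so the Laplacian integration by parts is legitimate, while the HLS inequality together with $L^{\frac{6 p}{3 + \beta}}$ and $W^{1, \frac{6 p}{3 + \beta}}$ membership controls $I_\beta \ast g$ and $\nabla g$, so that Fubini applies and the symmetrization is valid. The delicate point is showing that the cutoff-generated boundary terms—both the local ones and the tail of the nonlocal double integral over $\abs{x} \sim R$—vanish as $R \to \infty$, which one extracts from the fact that the integrability exponents force $\phi$, $\nabla \phi$ and $I_\beta \ast g$ to decay in an averaged sense sufficient to absorb the linear growth of the multiplier $x$. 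Since this is precisely the content of \cite[Proposition 3.1]{morozGroundstatesNonlinearChoquard2013}, one may alternatively verify that our hypotheses match its assumptions and invoke it directly.
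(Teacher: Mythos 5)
Your proposal is correct and takes essentially the same approach as the paper: the paper offers no independent proof of Proposition \ref{intro:prop:PohozaevHartree}, recalling it instead as a direct consequence of \cite[Proposition 3.1]{morozGroundstatesNonlinearChoquard2013}, whose proof is exactly the cutoff-localized multiplier argument you outline (testing against $\zeta_R \, x \cdot \nabla \phi$, exploiting the homogeneity of $I_\beta$ and symmetrization for the nonlocal term, and removing the cutoff by dominated convergence). Your closing remark --- that one can simply check the hypotheses and invoke that proposition directly --- is precisely what the paper does, and your coefficient bookkeeping assembles correctly into \eqref{intro:eqn:PohozaevHartree}.
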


Our first result is that bound states of \eqref{intro:eqn:RigorousHartreeDelta} satisfy a Pohožaev identity.

\begin{prop}
\label{intro:prop:PohozaevHartreeDelta}
If condition \eqref{intro:eqn:conditionForChoquardL2} is satisfied and $u := \phi + q \G_\lambda$ is a bound state of \eqref{intro:eqn:RigorousHartreeDelta} with frequency
$\omega$, then the following Pohožaev identity holds:
\begin{equation}
\label{intro:eqn:PohozaevHartreeDelta}
\frac{1}{2} Q \parens{u}
+
\frac{1}{2} \alpha \abs{q}^2
+
\frac{3}{2} \omega \norm{u}_{L^2}^2
=
\frac{3 + \beta}{2 p} N \parens{u}.
\end{equation}
\end{prop}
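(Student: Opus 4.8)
The plan is to read off the identity from the one-parameter family of dilations $u_t \parens{x} := u \parens{x / t}$, $t > 0$, combined with the fact that a bound state $u$ is a critical point of $E|_{X_\mu}$ whose Lagrange multiplier is $\omega$. First I would verify that $u_t \in X$ and pin down a valid decomposition. Since $u \in \Dom \parens{\op}$, its regular part satisfies $\phi \in H^2$; using the elementary scaling $\G_\lambda \parens{x / t} = t \, \G_{\lambda / t^2} \parens{x}$, a direct computation gives
\[
u_t
=
\phi \parens{\cdot / t} + \parens{q t} \G_{\lambda / t^2},
\]
so that $u_t$ has regular part $\phi_t := \phi \parens{\cdot / t} \in H^1$, charge $q_t := q t$ and parameter $\lambda_t := \lambda / t^2$. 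In particular $t \mapsto u_t$ is a curve in $X$ passing through $u$ at $t = 1$, and $Q$ is evaluated along it through this decomposition.

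Next I would record how the three relevant quantities scale. Working in $\real^3$ one has $\norm{u_t}_{L^2}^2 = t^3 \norm{u}_{L^2}^2$; the homogeneity of $I_\beta$ of degree $- \parens{3 - \beta}$ yields $N \parens{u_t} = t^{3 + \beta} N \parens{u}$; and inserting the decomposition above into the definition of $Q$ gives
\[
Q \parens{u_t}
=
t \parens*{Q \parens{u} - \alpha \abs{q}^2}
+
t^2 \alpha \abs{q}^2.
\]
The structural point is that every term of $Q$ scales linearly in $t$ except the $\alpha \abs{q}^2$ contribution, which scales like $t^2$: indeed $\abs{q_t}^2 = t^2 \abs{q}^2$, while the factor $\sqrt{\lambda_t} / \parens{4 \pi} = \sqrt{\lambda} / \parens{4 \pi t}$ absorbs exactly one power of $t$ in the remaining charge term. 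This mismatch is precisely what will produce the extra $\tfrac12 \alpha \abs{q}^2$ appearing in \eqref{intro:eqn:PohozaevHartreeDelta}.

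Finally, I would differentiate at $t = 1$. On one hand, the criticality condition $E' \parens{u} = - \omega \angles{\cdot, u}_{L^2}$ applied to the curve $u_t$ gives $\tfrac{\dif}{\dif t} E \parens{u_t} \big|_{t = 1} = - \tfrac{\omega}{2} \tfrac{\dif}{\dif t} \norm{u_t}_{L^2}^2 \big|_{t = 1} = - \tfrac32 \omega \norm{u}_{L^2}^2$. On the other hand, differentiating $E \parens{u_t} = \tfrac12 Q \parens{u_t} - \tfrac{1}{2 p} N \parens{u_t}$ via the explicit scalings above produces $\tfrac12 Q \parens{u} + \tfrac12 \alpha \abs{q}^2 - \tfrac{3 + \beta}{2 p} N \parens{u}$ at $t = 1$. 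Equating the two expressions and rearranging yields \eqref{intro:eqn:PohozaevHartreeDelta}. I expect the main obstacle to be the rigorous justification of the first differentiation: one must check that $t \mapsto u_t$ is differentiable in the topology in which $E$ is of class $C^1$, with generator $\tfrac{\dif}{\dif t} u_t \big|_{t = 1} = - x \cdot \nabla u$. This is where the singular part demands care, but it is benign in $\real^3$, since the worst local behaviour $u \sim q / \parens{4 \pi \abs{x}}$ gives $x \cdot \nabla u \sim - q / \parens{4 \pi \abs{x}}$, which is again comparable to $\G_\lambda$ and hence lies in $L^2$; together with $\phi \in H^2$ and condition \eqref{intro:eqn:conditionForChoquardL2}, this makes the variation admissible and $N$ differentiable along the curve. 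An alternative, more computational route would be to multiply \eqref{intro:eqn:boundStateHartreeDelta} by $x \cdot \nabla \bar u$, integrate over $\set{\abs{x} > \eps}$ and track the boundary terms on $\set{\abs{x} = \eps}$ as $\eps \to 0$; Proposition \ref{intro:prop:PohozaevHartree} would then supply the bulk contribution while the surviving spherical integrals of the $\G_\lambda$-singularity reproduce the $\tfrac12 \alpha \abs{q}^2$ term.
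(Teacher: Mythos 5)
Your scaling computation is formally correct, and it is a genuinely different route from the paper's: the identities $Q \parens{u_t} = t \parens*{Q \parens{u} - \alpha \abs{q}^2} + t^2 \alpha \abs{q}^2$, $N \parens{u_t} = t^{3 + \beta} N \parens{u}$ and $\norm{u_t}_{L^2}^2 = t^3 \norm{u}_{L^2}^2$ all hold, and the $t$-versus-$t^2$ mismatch is indeed the correct structural explanation for the extra term $\tfrac12 \alpha \abs{q}^2$. The genuine gap sits exactly in the step you flag as the main obstacle, but you misdiagnose where the danger lies. To write $\tfrac{\dif}{\dif t} E \parens{u_t} \big|_{t = 1} = E' \parens{u} \brackets{- x \cdot \nabla u} = - \tfrac32 \omega \norm{u}_{L^2}^2$ you need the curve $t \mapsto u_t$ to be differentiable in the topology in which $E$ is $C^1$ and in which the Lagrange identity pairs (in particular, in $L^2$), and you need the tangent vector $- x \cdot \nabla u$ to lie in the form domain $X$. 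By the standard theory of the dilation group, $t \mapsto \phi \parens{\cdot / t}$ is differentiable in $L^2$ at $t = 1$ if and only if $x \cdot \nabla \phi \in L^2$, and this fails for general $\phi \in H^2$: take $\phi = \sum_k k^{-1} \psi \parens{\cdot - x_k}$ with a fixed bump $\psi$ and $\abs{x_k} = e^k$; then $\phi \in H^2$ while $\norm{x \cdot \nabla \phi}_{L^2}^2 \gtrsim \sum_k k^{-2} e^{2 k} = \infty$. The singular part is, as you say, benign near the origin; the obstruction is at \emph{infinity}, where the weight $\abs{x}$ is unbounded and the hypotheses of the proposition give no decay on $\phi$ whatsoever (decay of bound states is neither assumed nor proved anywhere in the paper). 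So the chain rule you invoke is not available under the stated hypotheses, and there is no soft way to restore it.

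This is precisely the difficulty the paper's proof is engineered to avoid: it tests \eqref{intro:eqn:boundStateHartreeDelta} against $v_\mu \brackets{\overline u} \parens{x} = \tfrac{1}{\mu} D_\eta \parens{\mu x} \cdot \nabla \overline{u} \parens{x}$ with $D_\eta \parens{x} = \eta \parens{x} x$ \emph{compactly supported}, so that the weight $\abs{x}$ is cut off at scale $1 / \mu$ near infinity, and then passes to the limit $\mu \to 0$ via dominated convergence, Lemma \ref{Pohozaev:lem:pseudo_Greens_first_identity}, the identity $x \cdot \nabla \G_\lambda = - \parens*{\sqrt{\lambda} \abs{x} + 1} \G_\lambda$ and the self-adjointness of $- \Delta + \lambda$ for the cross terms. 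Note that your fallback route (integrating over $\set{\abs{x} > \eps}$ and tracking boundary terms on $\set{\abs{x} = \eps}$) truncates at the wrong end: it treats the origin, where nothing goes wrong, and leaves the problem at infinity untouched. Two further remarks. First, the paper defines bound states by the operator equation and only proves that constrained critical points are bound states; your argument needs the converse implication $E' \parens{u} = - \omega \angles{\cdot, u}_{L^2}$, which is true via the correspondence between $Q$ and $\op$ but must be stated and justified. Second, your scheme can be rigorously salvaged for \emph{minimizers}: the fibering map $t \mapsto E \parens{t^{-3/2} u_t}$ is an explicit scalar function whose derivative must vanish at $t = 1$, and combined with the Nehari identity (rigorous, since it only requires pairing the equation with $u \in L^2$) it yields \eqref{intro:eqn:PohozaevHartreeDelta} — but this covers only ground states, not the general bound states of the proposition.
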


Notice that when $\alpha = 0$, \eqref{intro:eqn:PohozaevHartreeDelta} reads as \eqref{intro:eqn:PohozaevHartree}. Furthemore,
$\parens{5 + 2 \beta} / 4 < \parens{3 + \beta} / 2$, so \eqref{intro:eqn:conditionForChoquardL2} implies \eqref{intro:eqn:conditionWellDefinedEnergy} and the integrals in \eqref{intro:eqn:PohozaevHartreeDelta} do not diverge. The theorem is proved by means of a weaker form of the divergence theorem (see Lemma \ref{Pohozaev:lem:pseudo_Greens_first_identity}), with which we can argue as in \cite[Proof of Proposition 3.1]{morozGroundstatesNonlinearChoquard2013}. We can then use dominated convergence, the properties of the function
$\G_\lambda$ and the fact that $-\Delta + \lambda$ is self-adjoint to conclude.

A first consequence of the Pohožaev identity is the following result of nonexistence of ground states.
\begin{thm}
\label{intro:thm:nonexistence_of_ground_states}
If $p = \parens{3 + \beta} / 3$ and $\alpha \geq 0$, then \eqref{intro:eqn:RigorousHartreeDelta} does not admit a ground state at any mass $\mu > 0$.
\end{thm}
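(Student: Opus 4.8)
The plan is to compute the value of $\E \parens{\mu}$ explicitly and then to show it cannot be attained, the contradiction being driven by the absence of a zero-energy state. Assume for contradiction that $u := \phi + q \G_\lambda \in X_\mu$ is a ground state at some mass $\mu > 0$. The decisive feature of the endpoint exponent $p = \parens{3 + \beta} / 3$ is that the Choquard term decouples from the gradient: since $6 p / \parens{3 + \beta} = 2$ for this value of $p$, the HLS inequality gives $N \parens{v} \leq C \norm{v}_{L^2}^{2 p}$ with no gradient factor, so that $S_\mu := \sup_{v \in X_\mu} N \parens{v} < \infty$. I would record this first, together with the fact (noted in the excerpt) that $\alpha \geq 0$ forces $Q \geq 0$ on $X$.

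Next I would identify $\E \parens{\mu} = - \tfrac{1}{2 p} S_\mu$. The lower bound is immediate: for every $v \in X_\mu$ one has $E \parens{v} = \tfrac{1}{2} Q \parens{v} - \tfrac{1}{2 p} N \parens{v} \geq - \tfrac{1}{2 p} S_\mu$, using $Q \parens{v} \geq 0$ and $N \parens{v} \leq S_\mu$. For the matching upper bound I would exploit the mass-preserving scaling $\phi_t \parens{x} := t^{3 / 2} \phi \parens{t x}$ on $H^1 \subset X$: a direct computation gives $\norm{\phi_t}_{L^2} = \norm{\phi}_{L^2}$, $\norm{\phi_t}_{D^{1, 2}}^2 = t^2 \norm{\phi}_{D^{1, 2}}^2$, and — precisely because $p = \parens{3 + \beta} / 3$ — the scale invariance $N \parens{\phi_t} = N \parens{\phi}$. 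Choosing $\phi_n \in H^1_\mu$ with $N \parens{\phi_n} \to S_\mu$ (possible since $H^1$ is dense in $L^2$ and $N$ is $L^2$-continuous, whence $\sup_{H^1_\mu} N = \sup_{X_\mu} N = S_\mu$) and letting $t \to 0^+$ yields $\E \parens{\mu} \leq \E^0 \parens{\mu} \leq - \tfrac{1}{2 p} S_\mu$, so equality holds.

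Finally I would close the loop. Since $u$ is a minimizer, $E \parens{u} = \E \parens{\mu} = - \tfrac{1}{2 p} S_\mu$, hence
\[
\frac{1}{2} Q \parens{u} = E \parens{u} + \frac{1}{2 p} N \parens{u} \leq - \frac{1}{2 p} S_\mu + \frac{1}{2 p} S_\mu = 0,
\]
using $N \parens{u} \leq S_\mu$. Thus $Q \parens{u} \leq 0$. But for $\alpha \geq 0$ the infimum $0$ of $Q \parens{v} / \norm{v}_{L^2}^2$ is not attained — equivalently, $\op$ has no eigenvalue at the bottom of its spectrum — so $Q \parens{v} > 0$ for every $v \in X \setminus \set{0}$; as $\norm{u}_{L^2}^2 = \mu > 0$, this contradicts $Q \parens{u} \leq 0$. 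The Pohožaev identity furnishes a parallel route to the same point: testing \eqref{intro:eqn:boundStateHartreeDelta} against $u$ gives the Nehari relation $Q \parens{u} + \omega \norm{u}_{L^2}^2 = N \parens{u}$, and combining it with \eqref{intro:eqn:PohozaevHartreeDelta} via $\parens{3 + \beta} / \parens{2 p} = 3 / 2$ yields $Q \parens{u} = \tfrac{1}{2} \alpha \abs{q}^2$, which once inserted back into the value of $E \parens{u}$ again forces $Q \parens{u} = 0$.

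The main obstacle is the sharp bookkeeping at the critical exponent: one must verify that at $p = \parens{3 + \beta} / 3$ the nonlinear term is controlled by the mass alone, so that $\E \parens{\mu}$ is a finite negative constant governed solely by $S_\mu$, and that the two inequalities pinning it down become simultaneous equalities only in the configuration $Q \parens{u} = 0$. The strict positivity $Q > 0$ on $X \setminus \set{0}$ for $\alpha \geq 0$ — that is, the nonexistence of a zero-energy bound state — is the indispensable structural input; without it (as already happens for $\alpha < 0$) the argument breaks down, consistent with the standing hypothesis $\alpha \geq 0$.
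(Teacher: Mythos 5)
Your argument is correct and, at its core, follows the same strategy as the paper: identify $\E \parens{\mu}$ as an explicit negative constant, deduce that a ground state would have to satisfy $Q \parens{u} \leq 0$, and contradict the strict positivity of $Q$ on $X_\mu$. Two differences in execution are worth noting. First, for the value of $\E \parens{\mu}$ the paper uses Lieb's \emph{sharp} HLS inequality, with its explicit extremal function, and scales that extremal; you instead work with $S_\mu := \sup_{X_\mu} N$ and a maximizing sequence in $H^1_\mu$. Your route avoids the classification of HLS extremizers entirely, at the cost of the density/continuity argument showing $\sup_{H^1_\mu} N = \sup_{X_\mu} N$, which you correctly supply; this is a genuine (if mild) simplification. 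Second --- and this is the step you should tighten --- your contradiction rests on the claim that for $\alpha \geq 0$ the infimum $0$ of $Q \parens{v} / \norm{v}_{L^2}^2$ is \emph{not attained} on $X \setminus \set{0}$, which you justify by appealing to the absence of an eigenvalue at the bottom of the spectrum of $\op$. The claim is true, but as stated it leans on facts not established in the paper: both the spectral structure of the three-dimensional point interaction and the representation-theorem argument converting a form minimizer into a zero eigenvector. The paper obtains exactly this positivity in two lines from Remark \ref{prelim:rmk:the_decomposition}: writing $u = \phi_\eps + q \G_{\eps \abs{q}^4 / \norm{u}_{L^2}^4}$ with $\eps = 1 / \parens{64 \pi^2}$, one has, for $\alpha \geq 0$,
\[
Q \parens{u}
\geq
\frac{\abs{q}^4}{128 \pi^2 \norm{u}_{L^2}^2}
>
0
\quad \text{when} \quad q \neq 0,
\]
while $Q \parens{u} = \norm{u}_{D^{1, 2}}^2 > 0$ when $q = 0$ and $u \not\equiv 0$. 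Substituting this verification for the spectral assertion makes your proof self-contained within the paper's framework. Incidentally, the printed proof reaches the endgame through the Nehari--Pohožaev consequence $Q \parens{u} = \frac{1}{2} \alpha \abs{q}^2$ before invoking the decomposition remark, so your ``parallel route'' paragraph is in fact the closer match to what the paper actually does.
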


We prove Theorem \ref{intro:thm:nonexistence_of_ground_states} by following the general argument in \cite[Proof of Theorem 1.1]{yeMassMinimizersConcentration2016} and taking into account the Pohožaev identity \eqref{intro:eqn:PohozaevHartreeDelta}. Notice that the hypothesis $\alpha \geq 0$ is essential in order to assure that $Q$ is positive-definite and argue as such.

Another consequence of the Pohožaev identity \eqref{intro:eqn:PohozaevHartreeDelta} is the following result about the form of bound states of \eqref{intro:eqn:RigorousHartreeDelta}.

\begin{prop}
\label{intro:prop:bound_states}
Suppose that $p = \parens{3 + \beta} / 3$, $\alpha \geq 0$ and
$u := \phi + q \G_\lambda$ is a bound state of \eqref{intro:eqn:RigorousHartreeDelta}. If either $q = 0$ or
$\phi \equiv 0$, then $u \equiv 0$.
\end{prop}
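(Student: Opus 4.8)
The plan is to combine two scalar identities satisfied by any bound state $u := \phi + q \G_\lambda$ of \eqref{intro:eqn:RigorousHartreeDelta}: a Nehari-type identity obtained by testing the equation against $u$ itself, and the Pohožaev identity \eqref{intro:eqn:PohozaevHartreeDelta}. The key structural fact I would exploit is that the exponent $p = \parens{3 + \beta}/3$ makes the factor $\parens{3 + \beta}/\parens{2 p}$ equal to $3/2$. This special value is exactly what causes the terms involving $N\parens{u}$ and $\omega \norm{u}_{L^2}^2$ to cancel when the two identities are compared, leaving a rigid relation between $Q\parens{u}$ and the charge $q$.

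First I would test the bound-state equation \eqref{intro:eqn:boundStateHartreeDelta} against $u$ in $L^2$. Since $Q$ is the quadratic form canonically associated with $\op$, one has $\angles{\op u, u}_{L^2} = Q\parens{u}$, while the nonlocal term yields exactly $N\parens{u}$; hence
\[
Q\parens{u} + \omega \norm{u}_{L^2}^2 = N\parens{u}.
\]
Writing the Pohožaev identity with $\parens{3 + \beta}/\parens{2 p} = 3/2$ and substituting the line above to eliminate both $N\parens{u}$ and $\omega \norm{u}_{L^2}^2$, I expect to reach the relation
\[
Q\parens{u} = \frac{1}{2} \alpha \abs{q}^2.
\]
Note that condition \eqref{intro:eqn:conditionForChoquardL2} does hold at the endpoint $p = \parens{3 + \beta}/3$, since $\parens{3 + \beta}/3 < \parens{5 + 2 \beta}/4$ for every $\beta > 0$, so the Pohožaev identity is indeed available.

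It then remains to treat the two cases. If $q = 0$, then $u = \phi$, the middle term of $Q$ vanishes and the charge term is absent, so the displayed relation reduces to $\norm{\phi}_{D^{1, 2}}^2 = 0$; thus $\phi$ is constant and, belonging to $L^2$, it vanishes. If instead $\phi \equiv 0$, then $u = q \G_\lambda$, and I would compute $Q\parens{q \G_\lambda}$ explicitly using $\norm{\G_\lambda}_{L^2}^2 = 1/\parens{8 \pi \sqrt{\lambda}}$, obtaining $Q\parens{u} = \abs{q}^2 \parens*{\alpha + \sqrt{\lambda}/\parens{8 \pi}}$. Combined with $Q\parens{u} = \frac{1}{2} \alpha \abs{q}^2$, this gives $\abs{q}^2 \parens*{\frac{1}{2} \alpha + \sqrt{\lambda}/\parens{8 \pi}} = 0$.

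The decisive point — and essentially the only place where the hypotheses $p = \parens{3 + \beta}/3$ and $\alpha \geq 0$ are genuinely used — is the sign in this last expression: since $\alpha \geq 0$ and $\lambda > 0$, the coefficient $\frac{1}{2} \alpha + \sqrt{\lambda}/\parens{8 \pi}$ is strictly positive, forcing $q = 0$ and hence $u \equiv 0$. I do not anticipate a deep obstacle here; the care required is mainly in the bookkeeping of the $\lambda$-dependent constants in $Q\parens{q \G_\lambda}$ and in carrying out the cancellation cleanly, so that the two cases both collapse onto the single identity $Q\parens{u} = \frac{1}{2} \alpha \abs{q}^2$.
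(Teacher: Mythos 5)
Your proposal is correct and follows essentially the same route as the paper: the paper likewise combines the Nehari identity $Q\parens{u} + \omega \norm{u}_{L^2}^2 = N\parens{u}$ with the Pohožaev identity at $p = \parens{3+\beta}/3$ to isolate the relation $Q\parens{u} = \frac{1}{2}\alpha\abs{q}^2$ (stated there as a standalone remark), and then disposes of the two cases exactly as you do, with the $\phi \equiv 0$ case reducing to $\frac{\sqrt{\lambda}}{8\pi}\abs{q}^2 = -\frac{1}{2}\alpha\abs{q}^2$, which is your sign argument in equivalent form.
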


We remark that analogs to the previous results also hold for NLSE in
$\real^3$ with a delta potential. Our last result is concerned with the existence of ground states of \eqref{intro:eqn:RigorousHartreeDelta}.
\begin{thm}
\label{intro:thm:existence_of_ground_states}
Suppose that
\begin{equation}
\label{intro:eqn:conditionForGroundState}
\frac{3 + \beta}{3}
<
p
<
\min \parens*{
	\frac{5 + \beta}{3}, \frac{5 + 2 \beta}{4}
}.
\end{equation}
Given $\mu > 0$, \eqref{intro:eqn:RigorousHartreeDelta} admits a ground state at mass $\mu$ and any such function is in $X_\mu \setminus H^1_\mu$.
\end{thm}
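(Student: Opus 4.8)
The plan is to realize a ground state as a minimizer of $E$ on $X_\mu$, so that the argument divides into coercivity, a strict binding inequality, and compactness. For coercivity I would combine the HLS inequality with a Gagliardo--Nirenberg-type inequality in $X$ controlling $\norm{u}_{L^{6p/(3+\beta)}}$ by the form $Q(u)$ and the mass; here the hypothesis $p < (5+\beta)/3$ is exactly what makes the exponent of the kinetic part in the resulting bound on $N(u)$ strictly below $2$, so that $\frac{1}{2p}N(u)$ is absorbed by $\frac{1}{2}Q(u)$ up to a constant depending on $\mu$. Thus $\E(\mu) > -\infty$ and any minimizing sequence $u_n = \phi_n + q_n \G_\lambda$ (with $\lambda$ fixed) is bounded, i.e. $\phi_n$ is bounded in $H^1$ and $q_n$ in $\complex$; passing to a subsequence, $\phi_n \rightharpoonup \phi$ in $H^1$ and $q_n \to q$, so $u_n \rightharpoonup u := \phi + q \G_\lambda$ in $L^2$.

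The heart of the matter is the strict binding inequality $\E(\mu) < \E^0(\mu)$. I would first invoke the classical theory of the mass-subcritical Choquard equation to obtain that $\E^0(\mu)$ is attained by a positive $\phi^* \in H^1_\mu$ solving
\[
-\Delta \phi^* + \omega_0 \phi^* = \parens{I_\beta \ast \abs{\phi^*}^p} \abs{\phi^*}^{p-2} \phi^*,
\]
where the Nehari identity gives $\norm{\phi^*}_{D^{1,2}}^2 = N(\phi^*) - \omega_0 \mu$ and, together with the Pohožaev identity of Proposition~\ref{intro:prop:PohozaevHartree}, forces $\omega_0 > 0$. I would then test $E$ against $u_\eps := s(\eps)\, \phi^* + \eps\, \G_{\omega_0} \in X_\mu$, with $s(\eps)$ fixed by the constraint $\norm{u_\eps}_{L^2}^2 = \mu$, and expand to first order in $\eps$. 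The key simplification is that the defining property $(-\Delta + \omega_0) \G_{\omega_0} = \delta_0$ turns the cross term $\int \parens{I_\beta \ast \abs{\phi^*}^p} \abs{\phi^*}^{p-2} \phi^* \, \G_{\omega_0}$ into $\phi^*(0)$; combined with the Nehari identity all bulk terms cancel, leaving
\[
\partial_\eps E(u_\eps)|_{\eps = 0} = - \phi^*(0) < 0.
\]
Since the $\alpha$-dependent part of $Q$ contributes only at order $\eps^2$, this decrease is insensitive to the sign of $\alpha$, which is why no restriction on $\alpha$ is needed. Hence $\E(\mu) \leq E(u_\eps) < \E^0(\mu)$ for small $\eps > 0$. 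This inequality also settles the structural claim at once: were a ground state $u$ to have charge $q = 0$, then $u \in H^1_\mu$ and $\E(\mu) = E(u) = E^0(u) \geq \E^0(\mu) > \E(\mu)$, a contradiction; so every ground state lies in $X_\mu \setminus H^1_\mu$.

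The main obstacle is the compactness of the minimizing sequence, which is delicate because the delta potential breaks translation invariance and the nonlinearity is nonlocal. I would control the loss of mass by a splitting argument: the remainder $u_n - u$ has charge $q_n - q \to 0$, so it is asymptotically an $H^1$ function and is measured by $E^0$; a Brezis--Lieb decomposition of $N$ together with the weak-convergence splitting of the quadratic form then yields
\[
\E(\mu) \geq \E(\nu) + \E^0(\mu - \nu), \qquad \nu := \norm{u}_{L^2}^2 \in \cci{0, \mu}.
\]
To exclude $\nu < \mu$ I would use the strict super-homogeneity of both levels, $\E(\theta s) < \theta \E(s)$ and $\E^0(\theta s) < \theta \E^0(s)$ for $\theta > 1$, which follows from the scaling $v \mapsto \sqrt{\theta}\, v$ and the fact that $N$ is homogeneous of degree $p > 1$ and strictly positive at the relevant scales (guaranteed by $\E(s), \E^0(s) < 0$). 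Indeed, for $0 < \nu < \mu$ these give $\E(\nu) > \tfrac{\nu}{\mu} \E(\mu)$ and, using the binding inequality, $\E^0(\mu - \nu) > \tfrac{\mu - \nu}{\mu} \E^0(\mu) > \tfrac{\mu - \nu}{\mu} \E(\mu)$, so that $\E(\nu) + \E^0(\mu - \nu) > \E(\mu)$, contradicting the splitting inequality; and $\nu = 0$ contradicts binding directly. Therefore $\nu = \mu$, whence $u_n \to u$ strongly in $L^2$; interpolating with the uniform bounds in $L^r$ for $r < 3$ gives $N(u_n) \to N(u)$, and weak lower semicontinuity of $Q$ yields $E(u) \leq \liminf E(u_n) = \E(\mu)$, while $u \in X_\mu$ forces $E(u) \geq \E(\mu)$. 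Thus $u$ is a minimizer of $E|_{X_\mu}$, i.e. a ground state, and by the binding inequality $u \in X_\mu \setminus H^1_\mu$.
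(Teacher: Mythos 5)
Your proposal is correct in substance and follows the same overall variational architecture (coercivity, strict binding $\E(\mu) < \E^0(\mu)$, exclusion of dichotomy), but it departs from the paper at the two decisive steps, and both departures are legitimate. For the binding inequality, the paper argues softly: a Hartree ground state $S$ is everywhere positive by the strong maximum principle, yet if it minimized $E|_{X_\mu}$ it would be a bound state of the perturbed problem with charge $q = 0$, and the boundary condition defining $T$ would force $S \parens{0} = 0$; contradiction. You instead perturb explicitly with $u_\eps := s \parens{\eps} \phi^* + \eps \G_{\omega_0}$ and compute the first variation. Your cancellation is genuine: writing $s' \parens{0} = - \angles*{\phi^*, \G_{\omega_0}}_{L^2} / \mu$, one gets
\[
\frac{\dif}{\dif \eps} E \parens{u_\eps} \Big|_{\eps = 0}
=
s' \parens{0}
\parens*{
\norm{\phi^*}_{D^{1,2}}^2 + \omega_0 \mu - N \parens{\phi^*}
}
-
\phi^* \parens{0}
=
- \phi^* \parens{0} < 0
\]
by the Nehari identity, with the cross term in $N$ collapsing to $\phi^* \parens{0}$ exactly as you say. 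Both arguments rest on the same facts ($\omega_0 > 0$, positivity and regularity of $\phi^*$), but yours is quantitative — it exhibits the rate of decrease and makes transparent why the sign of $\alpha$ is irrelevant — while the paper's is shorter. For compactness, the paper never leaves the level $\E$: it scales $u$ and $u_n - u$ inside $X$ and reaches $\E \parens{\mu} < \E \parens{\mu}$ via Brézis--Lieb splitting, whereas you measure the remainder by $\E^0$ (using that its charge $q_n - q$ vanishes) and exclude dichotomy by super-homogeneity of the two levels; your treatment of the case $\nu = 0$ is essentially the paper's Lemma \ref{groundStates:lem:AdamiLem3.4} in disguise.

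Two points would need patching in a full write-up. First, coercivity cannot be run on a fixed-$\lambda$ decomposition: there the interpolation bound for $N \parens{u}$ contains a term comparable to $\norm{\phi}_{D^{1,2}}^{3p - \parens{3 + \beta}} \abs{q}^{3 + \beta - p}$, whose total Young weight against $\norm{\phi}_{D^{1,2}}^2 + \abs{q}^2$ is $p > 1$, so absorption fails. The inequality you postulate (control by $Q$ and the mass) is true, but only because of the mass-adapted decomposition $\lambda = \eps \abs{q}^4 / \norm{u}_{L^2}^4$ of Remark \ref{prelim:rmk:the_decomposition}, under which $Q$ dominates $\frac{1}{2} \norm{\phi}_{D^{1,2}}^2 + c \abs{q}^4 / \mu$ and all exponents are subcritical precisely when $p < \parens{5 + \beta} / 3$; that choice is where the work lies. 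Second, for $\alpha < 0$ the strict super-homogeneity $\E \parens{\theta s} < \theta \E \parens{s}$ does not follow from $\E \parens{s} < 0$ alone, since $Q$ may be negative along a minimizing sequence and you cannot conclude that $N$ stays bounded away from zero; either note that testing with the eigenfunction $\G_{\parens{4 \pi \alpha}^2}$ gives $\E \parens{s} < - \parens{4 \pi \alpha}^2 s / 2$, which restores the argument, or simply observe that the weak inequality $\E \parens{\theta s} \leq \theta \E \parens{s}$ combined with the classical strict super-homogeneity of $\E^0$ already produces the contradiction $\E \parens{\nu} + \E^0 \parens{\mu - \nu} > \E \parens{\mu}$.
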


In particular, the conclusion of the theorem holds in the physically-relevant case $p = \beta = 2$. We also highlight that even when $\alpha = 0$, the ground state of \eqref{intro:eqn:RigorousHartreeDelta} whose existence is guaranteed by the theorem is different (has strictly smaller energy) than the usual ground state of \eqref{intro:eqn:Hartree} (which lies in $H^1_\mu$).

Let us dissect \eqref{intro:eqn:conditionForGroundState}. Our arguments will use the fact that ground states are bound states, so we want \eqref{intro:eqn:conditionForChoquardL2} to hold, hence the term
$\parens{5 + 2 \beta} / 4$. We also use the existence of ground states of \eqref{intro:eqn:Hartree} at any mass $\mu > 0$ and it was proved in \cite{yeMassMinimizersConcentration2016} that these ground states exist if, and only if,
$\parens{3 + \beta} / 3 < p < \parens{5 + \beta} / 3$,
hence the term $\parens{5 + \beta} / 3$.

The proof of the theorem follows the energy comparison argument used to prove \cite[Theorem 1.5 (i)]{adamiGroundStatesPlanar2022} and \cite[Theorem I.2 (i)]{adamiExistenceStructureRobustness2022}. Similarly to the GN-type inequality \cite[Proposition II.1]{adamiExistenceStructureRobustness2022} (see Proposition \ref{groundStates:prop:AdamiProp2.1}), we have to develop an interpolation inequality for our context in Proposition \ref{groundStates:prop:generalizationOfYe}. Notice that the proof of \cite[Theorem I.2 (i)]{adamiExistenceStructureRobustness2022} depends indirectly on \cite[Proposition III.3]{adamiExistenceStructureRobustness2022}, which mentions the radiality of ground states of the NLSE. Even though it seems to be unknown whether every ground state of the Hartree-type equation \eqref{intro:eqn:Hartree} is radial, this property is not used in the proof of \cite[Proposition III.2]{adamiExistenceStructureRobustness2022} (see the comment after \cite[Theorem 1.1]{georgievStandingWavesGlobal2022}), so an analogous result (that is, Lemma \ref{groundStates:lem:AdamiProp3.2}) also holds in the present context, hence the applicability of Adami, Boni, Carlone \& Tentarelli's argument.

\subsection*{Organization of the text}

We develop a few preliminaries in Section \ref{prelim}. The Pohožaev identity and its consequences are proved in Section \ref{Pohozaev}. To finish, Theorem \ref{intro:thm:existence_of_ground_states} is proved in Section \ref{groundStates}.

\subsection*{Notation and abbreviations}

Brackets are exclusively employed to enclose the argument of (multi-)linear functions. We let $D^{1, 2}$ denote the Hilbert space completion of $C_c^\infty$ with respect to
$
\angles{u, v}_{D^{1, 2}}
:=
\int \parens{\nabla u \cdot \nabla \overline{v}}
$. Throughout the text, we employ the following abbreviations:
\begin{center}
\begin{tabular}{c | c}
BL		&Brézis--Lieb;
\\
GN		&Gagliardo--Nirenberg;
\\
LHS		&Left-Hand Side;
\\
HLS		&Hardy--Littlewood--Sobolev;
\\
NLSE	&Nonlinear Schrödinger Equation;
\\
RHS		&Right-Hand Side.
\end{tabular}
\end{center}

\subsection*{Acknowledgements}

The author thanks Jaime Angulo Pava for informing about \cite{adamiExistenceStructureRobustness2022} and conversations about the topic. The author also thanks Gaetano Siciliano for useful suggestions. This study was financed in part by the Coordenação de Aperfeiçoamento de Pessoal de Nível Superior - Brasil (CAPES) - Finance Code 001.

\section{Preliminaries}
\label{prelim}

We want to recall a few computational results related to the functions $\G_\lambda$.

\begin{lem}
\label{prelim:lem:Glambda}
Suppose that $\lambda \in \ooi{0, \infty}$ and
$2 \leq r < 3$. We have
\[
\norm{\G_\lambda}_{L^2}^2 = \frac{1}{8 \pi \sqrt{\lambda}}
\quad \quad \text{and} \quad \quad
\norm{\G_\lambda}_{L^r}^r
=
\frac{1}{\lambda^{\frac{3 - r}{2}}} \norm{\G_1}_{L^r}^r.
\]
Given $\nu \in \ooi{0, \infty} \setminus \set{\lambda}$, the following equalities hold:
\[
\norm{\G_\lambda - \G_\nu}_{L^2}^2
=
\frac{1}{8 \pi}
\parens*{
	\frac{1}{\sqrt{\lambda}}
	+
	\frac{1}{\sqrt{\nu}}
	-
	\frac{4}{\sqrt{\lambda} + \sqrt{\nu}}
}
\]
and
\[
\norm{\G_\lambda - \G_\nu}_{D^{1, 2}}^2
=
\frac{1}{8 \pi}
\cdot
\frac{
	3 \lambda \sqrt{\nu} - 3 \nu \sqrt{\lambda}
	+
	\nu \sqrt{\nu} - \lambda \sqrt{\lambda}
}{\nu - \lambda}.
\]
\end{lem}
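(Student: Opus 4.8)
The plan is to reduce every quantity to elementary one-dimensional integrals by exploiting the explicit radial form of $\G_\lambda$. First I would compute $\norm{\G_\lambda}_{L^2}^2$ directly: passing to spherical coordinates, the factor $\abs{x}^{-2}$ cancels against the Jacobian $4\pi\abs{x}^2$, leaving $\frac{1}{4\pi}\int_0^\infty e^{-2\sqrt{\lambda}s}\,\dif s = \frac{1}{8\pi\sqrt{\lambda}}$. For the general $L^r$ norms I would instead record the scaling identity $\G_\lambda\parens{x} = \sqrt{\lambda}\,\G_1\parens{\sqrt{\lambda}x}$, which is immediate from the closed form; substituting $y = \sqrt{\lambda}x$ then gives $\norm{\G_\lambda}_{L^r}^r = \lambda^{r/2}\lambda^{-3/2}\norm{\G_1}_{L^r}^r = \lambda^{-\parens{3-r}/2}\norm{\G_1}_{L^r}^r$, with finiteness of $\norm{\G_1}_{L^r}^r$ for $2 \leq r < 3$ coming from \eqref{intro:eqn:integrabilityOfGlambda}.

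For the $L^2$ distance I would expand $\norm{\G_\lambda - \G_\nu}_{L^2}^2 = \norm{\G_\lambda}_{L^2}^2 + \norm{\G_\nu}_{L^2}^2 - 2\angles{\G_\lambda, \G_\nu}_{L^2}$ and evaluate the cross term by the same spherical reduction, obtaining $\angles{\G_\lambda, \G_\nu}_{L^2} = \frac{1}{4\pi}\int_0^\infty e^{-\parens{\sqrt{\lambda}+\sqrt{\nu}}s}\,\dif s = \frac{1}{4\pi\parens{\sqrt{\lambda}+\sqrt{\nu}}}$. Inserting the three values and factoring out $\frac{1}{8\pi}$ yields the claimed identity.

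The $D^{1,2}$ distance is the only place where a genuine idea is needed, and I would avoid differentiating $\G_\lambda$ by hand. Since $\G_\lambda$ is the fundamental solution, $-\Delta\G_\lambda + \lambda\G_\lambda = \delta_0$ in the sense of distributions, so subtracting the analogous identity for $\nu$ gives $-\Delta\parens{\G_\lambda - \G_\nu} = \nu\G_\nu - \lambda\G_\lambda$ in $L^2$. Because $\G_\lambda - \G_\nu \in H^2$ by \eqref{intro:eqn:GlambdaMinusGnuIsInH2}, I may integrate by parts to write $\norm{\G_\lambda - \G_\nu}_{D^{1,2}}^2 = \angles{-\Delta\parens{\G_\lambda - \G_\nu}, \G_\lambda - \G_\nu}_{L^2} = \angles{\nu\G_\nu - \lambda\G_\lambda, \G_\lambda - \G_\nu}_{L^2}$. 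Expanding this inner product reduces it to the four quantities already computed, and a short simplification recovers the stated formula; this is cleanest after the substitution $a = \sqrt{\lambda}$, $b = \sqrt{\nu}$, under which the expression collapses to $\frac{1}{8\pi}\cdot\frac{\parens{a-b}^2}{a+b}$, matching the given fraction whose numerator is exactly $-\parens{a-b}^3$ and whose denominator is $-\parens{a-b}\parens{a+b}$.

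I expect the only real obstacle to be bookkeeping: confirming that the integration-by-parts step is legitimate — which it is, via the $H^2$ membership recorded in the excerpt — and carrying out the final algebraic collapse without sign errors, for which the $a, b$ substitution serves as a safeguard.
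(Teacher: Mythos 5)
Your proof is correct in every step, and it is worth noting at the outset that the paper states Lemma \ref{prelim:lem:Glambda} with no proof at all (it is introduced as a recalled ``computational result''), so your write-up fills a gap rather than paralleling an existing argument. The first three identities --- the $L^2$ norm via the cancellation of $\abs{x}^{-2}$ against the spherical Jacobian, the scaling relation $\G_\lambda \parens{x} = \sqrt{\lambda}\, \G_1 \parens{\sqrt{\lambda} x}$, and the $L^2$ distance via the cross term $\angles{\G_\lambda, \G_\nu}_{L^2} = \parens{4 \pi \parens{\sqrt{\lambda} + \sqrt{\nu}}}^{-1}$ --- are exactly the direct computations the paper implicitly intends. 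Where you genuinely deviate is the $D^{1,2}$ identity: instead of differentiating $\G_\lambda - \G_\nu$ and integrating $\abs{\nabla \parens{\G_\lambda - \G_\nu}}^2$ by brute force, you use the distributional equation $-\Delta \parens{\G_\lambda - \G_\nu} = \nu \G_\nu - \lambda \G_\lambda$ and integrate by parts, reducing everything to the three inner products already computed; your algebraic collapse with $a = \sqrt{\lambda}$, $b = \sqrt{\nu}$ is right, since the stated numerator is $\parens{b - a}^3$ and the denominator is $\parens{b-a}\parens{b+a}$, giving $\frac{1}{8\pi} \cdot \frac{\parens{a - b}^2}{a + b}$, which indeed equals your expansion $\frac{1}{8\pi} \parens[\big]{\frac{2\parens{a^2 + b^2}}{a + b} - a - b}$. (Incidentally, your distributional identity is the correct one; the display in the paper's introduction asserting $\Delta \parens{\G_\lambda - \G_\mu} = \lambda \parens{\G_\lambda - \G_\mu}$ is a typo.) One point deserves care: you justify the integration by parts by citing \eqref{intro:eqn:GlambdaMinusGnuIsInH2}, but in the paper that $H^2$ membership is itself deduced from the $H^1$ membership that Lemma \ref{prelim:lem:Glambda} quantifies, so a strict reading raises a circularity worry. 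It is easily dispelled: on all of $\real^3$, knowing $w \in L^2$ and $\Delta w \in L^2$ (both of which you establish independently --- the former from your $L^2$ computations, the latter because $\nu \G_\nu - \lambda \G_\lambda \in L^2$ by \eqref{intro:eqn:integrabilityOfGlambda}) already yields $w \in H^2$ by the Fourier transform, with no a priori $H^1$ information. With that one-line remark inserted, your proof is self-contained and arguably cleaner than the direct differentiation it replaces.
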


Let us deduce a sufficient condition on $p$ to assure that
$\parens{I_\beta \ast \abs{u}^p} \abs{u}^{p - 2} u \in L^2$ for every $u \in X$.
\begin{lem}
\label{prelim:lem:conditionForIntegrability}
If $u \in L^r$ for $r \in \coi{2, 3}$ and
\begin{equation}
\label{appendix:eqn:theCondition}
\frac{3 + \beta}{3}
\leq
p
<
\frac{5 + 2 \beta}{4},
\end{equation}
then
$\parens{I_\beta \ast \abs{u}^p} \abs{u}^{p - 2} u \in L^2$.
\end{lem}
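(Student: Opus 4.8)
The plan is to bound the $L^2$-norm of the left-hand side by a product of $L^r$-norms of $u$ with $r \in \coi{2, 3}$, combining the HLS inequality for the nonlocal factor with Hölder's inequality to recombine. Since $I_\beta \ast \abs{u}^p \geq 0$, we have $\abs{\parens{I_\beta \ast \abs{u}^p} \abs{u}^{p - 2} u} = \parens{I_\beta \ast \abs{u}^p} \abs{u}^{p - 1}$, so it suffices to prove that $\parens{I_\beta \ast \abs{u}^p} \abs{u}^{p - 1} \in L^2$. First I would fix Hölder-conjugate exponents $a, b$ with $1 / a + 1 / b = 1 / 2$ and estimate
\[
\norm{\parens{I_\beta \ast \abs{u}^p} \abs{u}^{p - 1}}_{L^2}
\leq
\norm{I_\beta \ast \abs{u}^p}_{L^a}
\norm{\abs{u}^{p - 1}}_{L^b}.
\]
The HLS inequality then controls $\norm{I_\beta \ast \abs{u}^p}_{L^a}$ by $\norm{\abs{u}^p}_{L^s} = \norm{u}_{L^{ps}}^p$, where $1 / a = 1 / s - \beta / 3$, while $\norm{\abs{u}^{p - 1}}_{L^b} = \norm{u}_{L^{\parens{p - 1} b}}^{p - 1}$. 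Thus the whole right-hand side is finite as soon as the two exponents $ps$ and $\parens{p - 1} b$ lie in $\coi{2, 3}$.

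The heart of the argument is then a feasibility computation for these exponents. Writing $r_1 := ps$ and $r_2 := \parens{p - 1} b$ and substituting $1 / a = p / r_1 - \beta / 3$ and $1 / b = \parens{p - 1} / r_2$, the Hölder constraint $1 / a + 1 / b = 1 / 2$ becomes
\[
\frac{p}{r_1} + \frac{p - 1}{r_2}
=
\frac{1}{2} + \frac{\beta}{3}
=
\frac{3 + 2 \beta}{6}.
\]
As $r_1, r_2$ range over $\coi{2, 3}$, the left-hand side ranges over $\oci{\frac{2 p - 1}{3}}{\frac{2 p - 1}{2}}$, with the maximum attained at $r_1 = r_2 = 2$ and the infimum approached as $r_1, r_2 \to 3$. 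Hence an admissible pair exists precisely when $\frac{2 p - 1}{3} < \frac{3 + 2 \beta}{6} \leq \frac{2 p - 1}{2}$, which I would then check is equivalent to \eqref{appendix:eqn:theCondition}: the right-hand inequality rearranges to $p \geq \parens{3 + \beta} / 3$ (saturated by $r_1 = r_2 = 2$), and the left-hand inequality to $p < \parens{5 + 2 \beta} / 4$ (approached as $r_1, r_2 \to 3$). This is exactly how the two endpoints of the hypothesis on $p$ arise.

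The step I expect to be the main obstacle is verifying that the chosen exponents are genuinely \emph{admissible} for the HLS inequality, namely $1 < s < a < \infty$, which translates into the two further constraints $r_1 > p$ and $r_1 < 3 p / \beta$ (that is, $p / r_1 > \beta / 3$). One must confirm that these can be imposed simultaneously with $r_1, r_2 \in \coi{2, 3}$ and the displayed identity across the whole admissible range of $\parens{p, \beta}$; the constraint $r_1 > p$ is harmless at the lower endpoint since $p < 3$, but near the upper endpoint (and for $\beta$ close to $3$) the compatibility of $p / r_1 > \beta / 3$ with $r_2 < 3$ becomes delicate. Where this tension appears, I would circumvent it by splitting the Riesz kernel as $I_\beta = I_\beta \mathbb{1}_{B_1} + I_\beta \mathbb{1}_{B_1^c}$ and applying Young's inequality to each piece separately — the near part $I_\beta \mathbb{1}_{B_1}$ lies in $L^{q}$ for $q < 3 / \parens{3 - \beta}$ and the far part $I_\beta \mathbb{1}_{B_1^c}$ in $L^q$ for $q > 3 / \parens{3 - \beta}$ (including $q = \infty$), which relaxes the single HLS balance condition and restores enough freedom to keep every exponent of $u$ inside $\coi{2, 3}$.
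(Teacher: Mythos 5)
Your first two paragraphs are, up to notation, exactly the paper's own proof: the paper's $\delta$, $\eps$ and $\zeta$ are your $s$, $a/2$ and $b/2$, its conditions \eqref{appendix:eqn:aux:1}--\eqref{appendix:eqn:aux:2} are your requirements $r_1, r_2 \in \coi{2,3}$ (plus $s \geq 1$) together with the balance identity $p/r_1 + \parens{p-1}/r_2 = 1/2 + \beta/3$, and the feasibility count is the same. Moreover, the concern you raise in your third paragraph is not an artifact of your write-up: the paper itself never records the HLS admissibility requirement $\delta < 3/\beta$ (your $p/r_1 > \beta/3$), so the published proof is incomplete for precisely the reason you flag. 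Your diagnosis of where the tension lives is also correct: combined with the balance identity and $r_2 < 3$, the constraint $p/r_1 > \beta/3$ is violated exactly when $p \geq 5/2$, which \eqref{appendix:eqn:theCondition} permits exactly when $\beta > 5/2$.

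The gap is in your proposed repair: splitting the kernel cannot work, because the obstruction does not sit on the convolution factor at all. Through the Hölder relation $1/a + 1/b = 1/2$, the constraint $1/a > 0$ is the same thing as $1/b < 1/2$, i.e. $r_2 = \parens{p-1} b > 2 \parens{p-1}$; so what fails for $p \geq 5/2$ is that $\abs{u}^{p-1}$ must be placed in some $L^b$ with $b > 2$, which requires $u \in L^{\parens{p-1}b}$ with $\parens{p-1} b \geq 2\parens{p-1} \geq 3$ --- integrability the hypothesis does not supply. Young's inequality applied to $I_\beta \chi_{B_1}$ and $I_\beta \chi_{B_1^c}$ only changes what you can say about the left factor; each piece must still be Hölder-paired against $\abs{u}^{p-1} \in L^b$, $b > 2$, so the same wall reappears. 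In fact no argument can close this gap, because the statement is false in that region. Take $u = \G_1 \in X$, which satisfies the hypothesis by \eqref{intro:eqn:integrabilityOfGlambda}. For $\abs{x} \leq 1$ one has the crude bound $\parens{I_\beta \ast \abs{u}^p} \parens{x} \geq c_\beta 4^{\beta - 3} \int_{B_3} \abs{u}^p > 0$ (since $\abs{x - y} \leq 4$ there), hence $\parens{I_\beta \ast \abs{u}^p} \abs{u}^{p-1} \geq c \abs{x}^{-\parens{p-1}}$ on $B_1$, which is not square-integrable once $2 \parens{p - 1} \geq 3$. So for $\beta > 5/2$ and $5/2 \leq p < \parens{5 + 2\beta}/4$ the lemma (and with it the paper's proof) fails. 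The accurate statement replaces the upper bound in \eqref{appendix:eqn:theCondition} by $\min \parens*{5/2, \parens{5 + 2 \beta}/4}$; in that range one checks that the scheme of your first two paragraphs is genuinely admissible (e.g. $r_1 = r_2 = 2$ works at the left endpoint, and feasibility persists for all $p < 5/2$), so no splitting is needed, and nothing is lost when $\beta \leq 5/2$ --- in particular in the physically relevant case $p = \beta = 2$.
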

\begin{proof}
Suppose that $1 < p < 3$ and
\begin{equation}
\label{appendix:eqn:aux:1}
\max \parens*{1, \frac{2}{p}}
\leq
\delta
<
\frac{3}{p},
\end{equation}
so that $\abs{u}^p \in L^\delta$. It follows from the HLS Inequality that
$I_\beta \ast \abs{u}^p \in L^\gamma$, where
\[\gamma := \frac{3 \delta}{3 - \beta \delta}.\]
Now, suppose that $\eps, \zeta \in \ooi{1, \infty}$ are conjugate exponents such that $\parens{I_\beta \ast \abs{u}^p}^2 \in L^\eps$ and $\abs{u}^{2 \parens{p - 1}} \in L^\zeta$. It follows from Hölder's inequality that
\[
\int \parens*{
	\parens{I_\beta \ast \abs{u}^p}^2 \abs{u}^{2 \parens{p - 1}}
}
\leq
\norm*{I_\beta \ast \abs{u}^p}_{L^{2 \eps}}^2
\norm{u}^{2 \parens{p - 1}}_{L^{2 \parens{p - 1} \zeta}}.
\]
We know that $I_\beta \ast \abs{u}^p \in L^\gamma$, so we can take
$\eps = \gamma / 2$, which implies
\[
\zeta
=
\frac{\eps}{\eps - 1}
=
\frac{\gamma}{\gamma - 2}
=
\frac{3 \delta}{\parens{3 + 2 \beta} \delta - 6}.
\]
As $u \in L^{2 \parens{p - 1} \zeta}$ and we only know that
$u \in L^r$ for $r \in \coi{2, 3}$, we deduce that
\begin{equation}
\label{appendix:eqn:aux:2}
2
\leq 
2 \parens{p - 1} \frac{3 \delta}{\parens{3 + 2 \beta} \delta - 6}
<
3.
\end{equation}
It then follows from elementary manipulations of inequalities that \eqref{appendix:eqn:aux:1}, \eqref{appendix:eqn:aux:2} may only hold simultaneously when \eqref{appendix:eqn:theCondition} holds.
\end{proof}

A specific value of $\eps > 0$ will be useful for our computations.

\begin{rmk}
\label{prelim:rmk:the_decomposition}
With $\eps := 1 / \parens{64 \pi ^2}$, we have
\[
Q \parens{u}
=
\frac{1}{2} \norm{\phi}_{D^{1, 2}}^2
+
\frac{\abs{q}^4}{128 \pi^2 \norm{u}_{L^2}^2} \parens*{
	1 + \frac{\norm{\phi}_{L^2}^2}{\norm{u}_{L^2}^2}
}
+
\frac{\alpha}{2} \abs{q}^2
\]
for every
\[
u
:=
\phi + q \G_{\eps \abs{q}^4 / \norm{u}_{L^2}^4} \in X \setminus H^1.
\]
\end{rmk}

\section{The Pohožaev identity and its consequences}
\label{Pohozaev}

We need the following weaker form of the divergence theorem to prove Proposition \ref{intro:prop:PohozaevHartreeDelta}.

\begin{lem}
\label{Pohozaev:lem:pseudo_Greens_first_identity}
If $\eta \in C_c^\infty$, $2 \leq r < 3$ and
$u \in \Dom \parens{\op}$, then
\[
\int \parens*{
	D_\eta \cdot \nabla \parens*{\abs{u}^r}
}
+
\int \parens*{
	\abs{u}^r \divergence D_\eta
}
=
0,
\]
where $D_\eta \parens{x} := \eta \parens{x} x \in \real^3$.
\end{lem}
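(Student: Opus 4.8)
The identity to be established is precisely $\int \divergence \parens{\abs{u}^r D_\eta} = 0$, since the product rule gives $\divergence \parens{\abs{u}^r D_\eta} = D_\eta \cdot \nabla \parens*{\abs{u}^r} + \abs{u}^r \divergence D_\eta$. The only obstruction to invoking the classical divergence theorem is the singularity of $u$ at the origin: writing $u = \phi + q \G_\lambda$ with $\phi \in H^2$ and recalling $\G_\lambda \parens{x} = e^{-\sqrt{\lambda} \abs{x}} / \parens{4 \pi \abs{x}}$, we have $\abs{u \parens{x}} \lesssim \abs{x}^{-1}$ as $x \to 0$. The plan is therefore to excise a small ball $B_\delta$ around the origin, apply a Gauss--Green formula on the resulting region, and show that the boundary integral on $\partial B_\delta$ vanishes in the limit $\delta \to 0$.

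First I would record the relevant regularity and integrability. Since $H^2 \hookrightarrow C^0 \cap W^{1, 6}_\loc$ in $\real^3$ and $\G_\lambda$ is smooth away from the origin, $u$ is continuous on $\real^3 \setminus \set{0}$ with $\nabla u \in L^6_\loc \parens{\real^3 \setminus \set{0}}$. Because $r \geq 2$, the map $t \mapsto t^{r / 2}$ is $C^1$ on $\coi{0, \infty}$, so $\abs{u}^r = \parens{u \overline{u}}^{r / 2} \in W^{1, 6}_\loc \parens{\real^3 \setminus \set{0}}$ with $\nabla \parens*{\abs{u}^r} = r \abs{u}^{r - 2} \operatorname{Re} \parens{\overline{u} \nabla u}$; in particular $\abs{u}^r D_\eta$ is a compactly supported $W^{1, 1}$ vector field away from the origin. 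Near the origin one has $\abs{u}^r \lesssim \abs{x}^{-r}$, $\abs{D_\eta} = \abs{\eta} \abs{x} \lesssim \abs{x}$ and $\abs{\nabla \parens*{\abs{u}^r}} \lesssim \abs{x}^{-r - 1}$, so both integrands $D_\eta \cdot \nabla \parens*{\abs{u}^r}$ and $\abs{u}^r \divergence D_\eta$ are dominated by $\abs{x}^{-r}$, which is integrable near $0$ precisely because $r < 3$. Hence both integrals in the statement converge absolutely.

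Next I would fix $R$ with $\operatorname{supp} \eta \subset B_R$ and apply the Gauss--Green formula for $W^{1, 1}$ fields on the annulus $B_R \setminus \overline{B_\delta}$, whose boundary consists of $\partial B_R$ and $\partial B_\delta$. On $\partial B_R$ we have $\eta = 0$, so $D_\eta = 0$ and that contribution vanishes, leaving
\[
\int_{B_R \setminus \overline{B_\delta}} \divergence \parens{\abs{u}^r D_\eta}
=
-
\int_{\partial B_\delta} \abs{u}^r \parens{D_\eta \cdot \nu} \, \dif S.
\]
Estimating the right-hand side with $\abs{D_\eta} \lesssim \delta$ and $\abs{u}^r \lesssim \delta^{-r}$ on $\partial B_\delta$, together with $\abs{\partial B_\delta} = 4 \pi \delta^2$, yields a bound of order $\delta^{3 - r}$.

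Finally, letting $\delta \to 0$, the boundary term tends to $0$ because $r < 3$, while the left-hand side converges to $\int_{\real^3} \divergence \parens{\abs{u}^r D_\eta}$ by dominated convergence, using the $\abs{x}^{-r}$ domination from the first step; this gives $\int \divergence \parens{\abs{u}^r D_\eta} = 0$, which is the claimed identity. The main obstacle is the boundary-term estimate: it is exactly the linear vanishing of $D_\eta$ at the origin (the factor $x$ in $D_\eta = \eta x$) that compensates the $\abs{x}^{-r}$ blow-up of $\abs{u}^r$ and, combined with the surface measure of order $\delta^2$, produces the decay $\delta^{3 - r} \to 0$ under the hypothesis $r < 3$. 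The complementary hypothesis $r \geq 2$ is what guarantees that $\abs{u}^r$ is continuously differentiable at the possible interior zeros of $u$, so that the Gauss--Green step is legitimate.
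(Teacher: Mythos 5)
Your excision strategy is viable and is genuinely different from the paper's proof: the paper shows that the whole vector field $\abs{u}^r D_\eta$ lies in $W^{1, 1} \parens{\real^3, \real^3}$ (the factor $x$ in $D_\eta$ taming the singularity at the origin) and then applies the divergence theorem on all of $\real^3$, whereas you cut out $B_\delta$, use Gauss--Green on the annulus, and kill the boundary term of order $\delta^{3 - r}$. Both arguments run on the same numerology ($r < 3$ against the $\abs{x}^{-1}$ singularity of $\G_\lambda$ and the linear vanishing of $D_\eta$ at $0$), and your route has the merit of making the removability of the point singularity explicit rather than implicit in the claim that the pointwise product rule computes the global weak derivative.

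There is, however, a genuine gap in your integrability step. You assert the pointwise bounds $\abs{\nabla \parens{\abs{u}^r}} \lesssim \abs{x}^{- r - 1}$ near the origin, hence that $D_\eta \cdot \nabla \parens{\abs{u}^r}$ is dominated by $\abs{x}^{-r}$. This would require $\abs{\nabla u} \lesssim \abs{x}^{-2}$ pointwise, which is true for the $q \nabla \G_\lambda$ part but not for $\nabla \phi$: from $\phi \in H^2$ one only gets $\nabla \phi \in H^1 \hookrightarrow L^6$, and an $H^1$ function of three variables need not be locally bounded, so $\nabla \phi$ may blow up at the origin. The conclusions you draw from this domination (absolute convergence of both integrals, and the integrable bound used to pass to the limit $\delta \to 0$) are still true, but must be obtained in an integral rather than pointwise fashion. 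For instance, estimate
\[
\abs*{D_\eta \cdot \nabla \parens*{\abs{u}^r}}
\leq
r \abs{\eta} \abs{\cdot} \abs{u}^{r - 1}
\parens*{\abs{\nabla \phi} + \abs{q} \abs{\nabla \G_\lambda}},
\]
treat the $\G_\lambda$-terms pointwise as you do, and bound the remaining term near the origin by $\abs{\cdot}^{-\parens{r - 2}} \abs{\nabla \phi}$, which is in $L^1_\loc$ by Hölder's inequality because $\nabla \phi \in L^6$ and $\abs{\cdot}^{-\parens{r - 2}} \in L^{6 / 5}_\loc$ when $r < 3$; alternatively, as in the paper, pair $\abs{u}^{r - 1} \in L^{r / \parens{r - 1}}$ with $\abs{\cdot} \abs{\nabla u} \eta \in L^r$, the latter following from $H^2 \hookrightarrow W^{1, r}$ for the $\phi$ part and from the identity $\abs{\cdot} \abs{\nabla \G_\lambda} = \parens{\sqrt{\lambda} \abs{\cdot} + 1} \G_\lambda \in L^r$ for the singular part. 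With this repair, the rest of your argument is sound: the boundary estimate only uses $\abs{u} \lesssim \abs{x}^{-1}$ (valid, since $\phi$ is bounded), and once $\divergence \parens{\abs{u}^r D_\eta} \in L^1$ is secured, the convergence of the annular integrals as $\delta \to 0$ follows from absolute continuity of the integral, with no pointwise dominating function needed.
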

\begin{proof}
Suppose that
$u := \phi + q \G_\lambda \in \Dom \parens{\op}$.
It follows from the product rule that
\begin{equation}
\label{Pohozaev:eqn:aux:8}
D_\eta \cdot \nabla \parens*{\abs{u}^r}
+
\abs{u}^r \divergence D_\eta
=
\divergence \parens*{\abs{u}^r D_\eta}.
\end{equation}
Let us show that the terms on both the LHS and the RHS are integrable.

We claim that
\begin{equation}
\label{Pohozaev:eqn:aux:5}
\abs{u}^r D_\eta \in W^{1, 1} \parens{\real^3, \real^3}.
\end{equation}
It is clear that $\abs{u}^r D_\eta \in L^1 \parens{\real^3, \real^3}$ because $\eta \in C_c^\infty$ and $u \in L^r$. Let us prove that
$
\parens{\abs{u}^r D_\eta}'
\in
L^1 \parens{\real^3, \real^{3 \times 3}}
$.
It follows from the product rule that
\begin{multline*}
\parens*{\abs{u}^r D_\eta}' \parens{x} \brackets{y}
=
\parens*{
	r \abs*{u \parens{x}}^{r - 2}
	\Re \brackets*{
		u \parens{x}
		\nabla \overline{u} \parens{x} \cdot y
	}
	\eta \parens{x}
	+
	\abs*{u \parens{x}}^r
	\nabla \eta \parens{x} \cdot y
}
x
+
\\
+
\abs*{u \parens{x}}^r \eta \parens{x} y,
\end{multline*}
so
\[
\abs*{\parens*{\abs{u}^r D_\eta}' \parens{x}}
\leq
r
\abs{x}
\abs*{u \parens{x}}^{r - 1}
\abs*{\nabla u \parens{x}}
\eta \parens{x}
+
\abs*{u \parens{x}}^r 
\parens*{
	\abs*{\nabla \eta \parens{x}} \abs{x}
	+
	\eta \parens{x}
}.
\]
As
$
\abs{u}^r \parens{\abs{\cdot} \abs{\nabla \eta} + \eta}
\in
L^1
$,
because $u \in L^r$ and $\eta \in C_c^\infty$, we only have to prove that $\abs{\cdot} \abs{u}^{r - 1} \abs{\nabla u} \eta \in L^1$
to conclude. Due to the Hölder inequality, it suffices to show that
$\abs{\cdot} \abs{\nabla u} \eta \in L^r$. On one hand,
$\abs{\cdot} \abs{\nabla \phi} \eta \in L^r$, because
$\phi \in H^2 \hookrightarrow W^{1, r}$ and
$\eta \in C_c^\infty$. On the other hand, we have
$\abs{\cdot} \abs{\nabla \G_\lambda} \eta \in L^r$, because
\[
\abs{\cdot} \abs{\nabla \G_\lambda}
=
\parens*{
	\sqrt{\lambda} \abs{\cdot}
	+
	1
}
\G_\lambda
\in
L^r.
\]

The fact that
$D_\eta \cdot \nabla \parens*{\abs{u}^r} \in L^1$
follows from the previous paragraph, while it is clear that
$\abs{u}^r \divergence D_\eta \in L^1$. By integrating both sides in \eqref{Pohozaev:eqn:aux:8}, we obtain
\[
\int \parens*{
	D_\eta \cdot \nabla \parens*{\abs{u}^r}
}
+
\int \parens*{
	\abs{u}^r \divergence D_\eta
}
=
\int \divergence \parens*{\abs{u}^r D_\eta}.
\]
Due to \eqref{Pohozaev:eqn:aux:5}, the result follows from the divergence theorem.
\end{proof}

We proceed to the proof of the Pohožaev identity.

\begin{proof}[Proof of Proposition \ref{intro:prop:PohozaevHartreeDelta}]

Let $\eta \in C_c^\infty \parens{\real^3, \cci{0, 1}}$ be such that
$\eta \parens{x} = 1$ if $\abs{x} < 1$. Associate each pair
$\parens{u, \mu} \in X \times \ooi{0, \infty}$
to the function given for a.e. $x \in \real^3$ by
\[
v_\mu \brackets{u} \parens{x}
:=
\frac{1}{\mu} D_\eta \parens{\mu x} \cdot \nabla u \parens{x},
\]
with $D_\eta$ as defined in Lemma \ref{Pohozaev:lem:pseudo_Greens_first_identity}.

Suppose that
$u := \phi + q \G_\lambda$ is a bound state of \eqref{intro:eqn:RigorousHartreeDelta} with frequency $\omega$. By integrating \eqref{intro:eqn:boundStateHartreeDelta} against
$
v_\mu \brackets{\overline{u}}
=
v_\mu \brackets{~\overline{\phi}~}
+
\overline{q} v_\mu \brackets{\G_\lambda}
$,
we obtain
\begin{multline*}
\underbrace{
	\int \parens*{
		- v_\mu \brackets{~\overline{\phi}~} \Delta \phi
	}
}_{\parens{\ast}_\mu}
+
\underbrace{\parens*{
	- \lambda \abs{q}^2
	\int \parens*{\G_\lambda v_\mu \brackets{\G_\lambda}}
}}_{\parens{\ast \ast}_\mu}
+
\underbrace{
	\omega \int \parens*{u v_\mu \brackets{\overline{u}}}
}_{\parens{\ast \ast \ast}_\mu}
+
\\
+
\underbrace{
	\parens*{
		- \int \parens*{
			\parens*{I_\beta \ast \abs{u}^p} \abs{u}^{p - 2} u
			v_\mu \brackets{\overline{u}}
		}
	}
}_{\parens{\ast \ast \ast \ast}_\mu}
+
\underbrace{
	\int \parens*{
		-
		\overline{q} v_\mu \brackets{\G_\lambda} \Delta \phi
		-
		q
		\lambda
		\G_\lambda
		v_\mu
		\brackets{~\overline{\phi}~}
	}
}_{\parens{\ast \ast \ast \ast \ast}_\mu}
=
0.
\end{multline*}

As $\phi \in H^2$, dominated convergence shows that
$\parens{\ast}_\mu \to - \norm{\phi}_{D^{1, 2}}^2 / 2$
as $\mu \to 0$. In view of Lemma \ref{Pohozaev:lem:pseudo_Greens_first_identity}, dominated convergence also shows that
\[
\parens{\ast \ast}_\mu
\to
\frac{3}{2} \lambda \abs{q}^2 \norm{\G_\lambda}_{L^2}^2
=
\frac{3 \sqrt{\lambda}}{16 \pi} \abs{q}^2;
\quad \quad
\parens{\ast \ast \ast}_\mu
\to
- \frac{3}{2} \omega \norm{u}_{L^2}^2
\]
and
$
\parens{\ast \ast \ast \ast}_\mu
\to
\parens{3 + \beta} N \parens{u} / \parens{2 p}
$
as $\mu \to 0$. (for details, see \cite[Proof of Proposition 3.1]{morozGroundstatesNonlinearChoquard2013})

Once again, dominated convergence implies
\[
\parens{\ast \ast \ast \ast \ast}_\mu
\to
\underbrace{
\overline{q}
	\int
		- \Delta \phi \parens{x}
		\parens*{x \cdot \nabla \G_\lambda \parens{x}}
	\dif x
}_{\parens{\dagger}}
-
\underbrace{
	q \lambda
	\int
		\G_\lambda \parens{x}
		\parens*{x \cdot \nabla \overline{\phi} \parens{x}}
	\dif x
}_{\parens{\dagger \dagger}}
\]
as $\mu \to 0$. As
\[
x \cdot \nabla \G_\lambda \parens{x}
=
-
\parens*{
	\sqrt{\lambda} \abs{x}
	+
	1
}
\G_\lambda \parens{x},
\]
we obtain
\begin{align*}
\parens{\dagger}
&=
\frac{\sqrt{\lambda}}{4 \pi} \overline{q}
\int \Delta \phi \parens{x} e^{- \sqrt{\lambda} \abs{x}} \dif x
-
\overline{q}
\int - \Delta \phi \parens{x} \G_\lambda \parens{x} \dif x;
\\
&=
\frac{\sqrt{\lambda}}{4 \pi} \overline{q}
\int \Delta \phi \parens{x} e^{- \sqrt{\lambda} \abs{x}} \dif x
-
\overline{q}
\int
	\phi \parens{x} \parens{- \Delta + \lambda} \G_\lambda \parens{x}
\dif x
+
\lambda \overline{q} \angles{\phi, \G_\lambda}_{L^2};
\\
&=
\frac{\sqrt{\lambda}}{4 \pi} \overline{q}
\int \Delta \phi \parens{x} e^{- \sqrt{\lambda} \abs{x}} \dif x
-
\phi \parens{0} \overline{q}
+
\lambda \overline{q} \angles{\phi, \G_\lambda}_{L^2}.
\end{align*}
Integrating by parts, we obtain
\[
\parens{\dagger}
-
\overline{\parens{\dagger \dagger}}
=
-
\phi \parens{0} \overline{q}
+
\lambda \overline{q} \angles{\phi, \G_\lambda}_{L^2},
\]
and thus
\[
\Re \brackets*{
	\parens{\dagger}
	-
	\parens{\dagger \dagger}
}
=
-
\phi \parens{0} \overline{q}
+
\lambda \angles*{ \Re \brackets{\overline{q} \phi}, \G_\lambda}_{L^2}.
\]
\end{proof}

Let us highlight an identity which follows from the Pohožaev identity \eqref{intro:eqn:PohozaevHartreeDelta} when
$p = \parens{3 + \beta} / 3$.

\begin{rmk}
Suppose that $p = \parens{3 + \beta} / 3$ and
$u := \phi + q \G_\lambda$ is a bound state of \eqref{intro:eqn:RigorousHartreeDelta} with frequency $\omega$. In view of the \emph{Nehari identity}
$Q \parens{u} + \omega \norm{u}_{L^2}^2 = N \parens{u}$
and the Pohožaev identity \eqref{intro:eqn:PohozaevHartreeDelta}, we deduce that
\begin{equation}
\label{Pohozaev:eqn:another_identity}
Q \parens{u} = \frac{1}{2} \alpha \abs{q}^2.
\end{equation}
\end{rmk}

The previous identity will play a major role in the proofs of the consequences of Proposition \ref{intro:prop:PohozaevHartreeDelta}.

\begin{proof}
[Proof of Theorem \ref{intro:thm:nonexistence_of_ground_states}]
It follows from \cite{liebSharpConstantsHardylittlewoodsobolev1983} that
\begin{equation}
\label{Pohozaev:eqn:HLS}
N \parens{u}
\leq
\parens*{
	\frac{\norm{u}_{L^2}^2}{\norm{Q}_{L^2}^2}
}^{\frac{3 + \beta}{3}}
\quad \text{for every} \quad
u \in L^2
\end{equation}
and equality holds if, and only if, $u = Q$, where $Q \colon \real^3 \to \real$ is given by
\[
Q \parens{x}
:=
C \frac{\eta}{\parens{\eta^2 + \abs{x - a}^2}^{\frac{3}{2}}}
\]
and $a \in \real^3$, $C, \eta > 0$ are parameters. As $\alpha \geq 0$, it follows from \eqref{Pohozaev:eqn:HLS} that
\[
\E \parens{\mu}
\geq
-
\frac{3}{2 \parens{3 + \beta}}
\parens*{\frac{\mu}{\norm{Q}_{L^2}^2}}^{\frac{3 + \beta}{3}}.
\]

We claim that
\[
\E \parens{\mu}
=
-
\frac{3}{2 \parens{3 + \beta}}
\parens*{\frac{\mu}{\norm{Q}_{L^2}^2}}^{\frac{3 + \beta}{3}}.
\]
Indeed, associate each $t > 0$ to the function
$Q_t \colon \real^3 \to \real$ defined as
\[
Q_t \parens{x}
=
\frac{\sqrt{\mu} t^{3 / 2}}{\norm{Q}_{L^2}} Q \parens{t x},
\]
so that $\norm{Q_t}_{L^2}^2 = \mu$. The result then follows from the fact that
\[
E \parens{Q_t}
\to
- \frac{3}{2 \parens{3 + \beta}}
\parens*{\frac{\mu}{\norm{Q}_{L^2}^2}}^{\frac{3 + \beta}{3}}
\]
as $t \to 0$.

Suppose that $u := \phi + q \G_\lambda$ is a ground state of \eqref{intro:eqn:RigorousHartreeDelta} at mass $\mu$. As
$\alpha \geq 0$, we have $Q \parens{u} \geq 0$. Therefore,
\[
0
\leq
\frac{1}{2} Q \parens{u}
=
\frac{3}{2 \parens{3 + \beta}}
\parens*{
	N \parens{u}
	-
	\parens*{\frac{\mu}{\norm{Q}_{L^2}^2}}^{\frac{3 + \beta}{3}}
}.
\]
In view of \eqref{Pohozaev:eqn:HLS}, we deduce that
$Q \parens{u} = 0$. Due to \eqref{Pohozaev:eqn:another_identity}, we deduce that either $\alpha = 0$ or $q = 0$. If $q = 0$, then
$Q \parens{u} = \norm{u}_{D^{1, 2}}^2 = 0$, so $u \equiv 0$. We claim that the case $q \neq 0$ is impossible. Indeed, suppose that
$q \neq 0$ and $\alpha = 0$. Let $\eps = 1 / \parens{64 \pi^2}$ and
$\phi_\eps \in H^2$ be such that
$u = \phi_\eps + q \G_{\eps \abs{q}^4 / \norm{u}_{L^2}^4}$.
In view of Remark \ref{prelim:rmk:the_decomposition}, 
$\phi_\eps \equiv 0$ and $q = 0$, which contradicts the fact that $q \neq 0$. Contradiction.
\end{proof}

We proceed to the proof of the second consequence of Proposition \ref{intro:prop:PohozaevHartreeDelta}.

\begin{proof}[Proof of Proposition \ref{intro:prop:bound_states}]
If $q = 0$, then it follows from \eqref{Pohozaev:eqn:another_identity} that $Q \parens{u} = \norm{u}_{D^{1, 2}}^2 = 0$, so $u \equiv 0$. If $\phi \equiv 0$, then it follows from \eqref{Pohozaev:eqn:another_identity} that
\[
\frac{\sqrt{\lambda}}{8 \pi} \abs{q}^2
=
- \frac{1}{2} \alpha \abs{q}^2,
\]
so $q = 0$.
\end{proof}

\section{Existence of ground states}
\label{groundStates}

We need to develop a few preliminaries before proving Theorem \ref{intro:thm:existence_of_ground_states}. Some of them are analogous to results in \cite[Section III]{adamiExistenceStructureRobustness2022} (see also \cite{adamiGroundStatesPlanar2022}) and are proved similarly, but we include their proof here for the convenience of the reader. Our first goal is to recall two inequalities. The first one is a recent generalization of the GN Inequality.

\begin{prop}[{\cite[Proposition II.1]{adamiExistenceStructureRobustness2022}}]
\label{groundStates:prop:AdamiProp2.1}
Suppose that $2 < r < 3$. There exists $K > 0$ (which depends on $r$) such that
\[
\norm{u}_{L^r}^r
\leq
K
\parens*{
	\norm{\phi}_{D^{1, 2}}^{\frac{3 \parens{r - 2}}{2}}
	\norm{\phi}_{L^2}^{\frac{6 - r}{2}}
	+
	\frac{\abs{q}^r}{\lambda^{\frac{3 - r}{2}}}
}
\]
for every $u := \phi + q \G_\lambda \in X$.
We can also associate each $\eps > 0$ to a $K_\eps > 0$ (which depends on $r$) for which
\[
\norm{u}_{L^r}^r
\leq
K_\eps
\parens*{
	\norm{\phi}_{D^{1, 2}}^{\frac{3 \parens{r - 2}}{2}}
	\norm{u}_{L^2}^{\frac{6 - r}{2}}
	+
	\abs{q}^{3 \parens{r - 2}} \norm{u}_{L^2}^{2 \parens{3 - r}}
}
\]
for every
\[
u
:=
\phi + q \G_{\eps \abs{q}^4 / \norm{u}_{L^2}^4}
\in X \setminus H^1.
\]
\end{prop}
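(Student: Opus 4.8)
The plan is to establish both inequalities by splitting $u = \phi + q \G_\lambda$ and treating the regular part $\phi$ and the singular part $q \G_\lambda$ separately. The first inequality is the core estimate; the second follows by substituting into it the specific value of $\lambda$ prescribed in its hypothesis.

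For the first inequality, I would start from the triangle inequality in $L^r$ combined with the elementary bound $(a + b)^r \leq 2^{r - 1}(a^r + b^r)$ (valid since $r > 1$) to obtain
\[
\norm{u}_{L^r}^r
\leq
2^{r - 1} \parens*{
	\norm{\phi}_{L^r}^r
	+
	\abs{q}^r \norm{\G_\lambda}_{L^r}^r
}.
\]
The singular term is handled directly by Lemma \ref{prelim:lem:Glambda}, which gives $\abs{q}^r \norm{\G_\lambda}_{L^r}^r = \norm{\G_1}_{L^r}^r \, \abs{q}^r / \lambda^{(3 - r)/2}$. For the regular term, since $\phi \in H^1$ and $2 < r < 3 < 6$, the Gagliardo--Nirenberg inequality in $\real^3$ yields $\norm{\phi}_{L^r} \leq C \norm{\phi}_{D^{1, 2}}^\theta \norm{\phi}_{L^2}^{1 - \theta}$ with $\theta = 3(r - 2)/(2 r)$ fixed by scaling. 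Raising to the $r$-th power produces the exponents $r \theta = 3(r - 2)/2$ and $r(1 - \theta) = (6 - r)/2$, which are exactly the ones in the statement. Collecting both contributions with $K := 2^{r - 1} \max(C^r, \norm{\G_1}_{L^r}^r)$ completes the first inequality.

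For the second inequality, I would insert $\lambda = \eps \abs{q}^4 / \norm{u}_{L^2}^4$ into the first. The singular term transforms by a direct computation: since $\lambda^{(3 - r)/2} = \eps^{(3 - r)/2} \abs{q}^{2(3 - r)} / \norm{u}_{L^2}^{2(3 - r)}$, one finds $\abs{q}^r / \lambda^{(3 - r)/2} = \eps^{-(3 - r)/2} \abs{q}^{3(r - 2)} \norm{u}_{L^2}^{2(3 - r)}$, matching the second term. The regular term requires replacing $\norm{\phi}_{L^2}$ by $\norm{u}_{L^2}$. The key observation is that with this choice of $\lambda$, Lemma \ref{prelim:lem:Glambda} gives $\sqrt{\lambda} = \sqrt{\eps} \, \abs{q}^2 / \norm{u}_{L^2}^2$, whence $\abs{q}^2 \norm{\G_\lambda}_{L^2}^2 = \norm{u}_{L^2}^2 / (8 \pi \sqrt{\eps})$; therefore the triangle inequality $\norm{\phi}_{L^2} \leq \norm{u}_{L^2} + \abs{q} \norm{\G_\lambda}_{L^2}$ yields $\norm{\phi}_{L^2} \leq (1 + (8 \pi \sqrt{\eps})^{-1/2}) \norm{u}_{L^2}$. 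Substituting this bound into the first term and absorbing every constant into a single $K_\eps$ finishes the proof.

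I expect the main obstacle to be purely bookkeeping: checking that the prescribed value of $\lambda$ makes both the $\abs{q}$-powers and the $\norm{u}_{L^2}$-powers line up correctly in the second term. The only genuinely substantive point is the observation that this value of $\lambda$ forces $\abs{q} \norm{\G_\lambda}_{L^2}$ to be comparable to $\norm{u}_{L^2}$, which is precisely what lets one trade the $L^2$-norm of the regular part for that of $u$ itself.
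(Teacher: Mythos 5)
Note first that the paper itself gives no proof of this proposition: it is recalled verbatim, with citation, from \cite[Proposition II.1]{adamiExistenceStructureRobustness2022}, so there is no internal argument to compare against. Your proof is correct and is the natural one (matching the strategy of the cited source): the splitting $\norm{u}_{L^r}^r \leq 2^{r-1}\parens{\norm{\phi}_{L^r}^r + \abs{q}^r \norm{\G_\lambda}_{L^r}^r}$, the classical GN inequality with $\theta = 3\parens{r-2}/\parens{2r}$ for the $H^1$ part, and the scaling identity of Lemma \ref{prelim:lem:Glambda} for the singular part yield the first inequality, with all exponents checking out. For the second inequality, your substitution $\lambda = \eps \abs{q}^4 / \norm{u}_{L^2}^4$ handles the singular term directly (since $r - 2\parens{3-r} = 3\parens{r-2}$), and the key observation that this choice forces $\abs{q}\norm{\G_\lambda}_{L^2} = \parens{8\pi\sqrt{\eps}}^{-1/2}\norm{u}_{L^2}$, hence $\norm{\phi}_{L^2} \leq \parens{1 + \parens{8\pi\sqrt{\eps}}^{-1/2}}\norm{u}_{L^2}$, is exactly what is needed to convert the $\norm{\phi}_{L^2}$ factor into $\norm{u}_{L^2}$; the argument is complete.
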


The second one is a particular case of the interpolation inequality \cite[(1.6)]{yeMassMinimizersConcentration2016}.

\begin{lem}
\label{groundStates:lem:Ye1.6}
If $\parens{3 + \beta} / 3 < p < 3 + \beta$, then there exists
$K > 0$ (which depends on $p$) such that
\[
N \parens{\phi}
\leq
K
\norm{\phi}_{D^{1, 2}}^{3 p - \parens{3 + \beta}}
\norm{\phi}_{L^2}^{3 + \beta - p}
\]
for every $\phi \in H^1$.
\end{lem}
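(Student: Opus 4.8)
The plan is to obtain the inequality by composing the Hardy--Littlewood--Sobolev (HLS) inequality with the classical Gagliardo--Nirenberg (GN) inequality in $\real^3$; no genuinely new idea is needed beyond tracking exponents. First I would apply HLS to the nonlocal term. Since $I_\beta$ is a constant multiple of $\abs{x}^{-\parens{3 - \beta}}$, we may write
\[
N \parens{\phi}
=
c \iint \frac{\abs{\phi \parens{x}}^p \abs{\phi \parens{y}}^p}{\abs{x - y}^{3 - \beta}} \dif x \dif y,
\]
and the symmetric form of HLS yields
\[
N \parens{\phi}
\leq
C \norm*{\abs{\phi}^p}_{L^s}^2
=
C \norm{\phi}_{L^{ps}}^{2 p},
\]
where $s$ is determined by $2 / s + \parens{3 - \beta} / 3 = 2$, that is, $s = 6 / \parens{3 + \beta}$ and hence $p s = 6 p / \parens{3 + \beta}$. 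Because $0 < \beta < 3$ forces $3 < 3 + \beta < 6$, we have $s \in \ooi{1, 2}$ and the kernel exponent $3 - \beta \in \ooi{0, 3}$, so HLS is applicable.

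Next I would interpolate the $L^{6 p / \parens{3 + \beta}}$ norm. Writing $m := 6 p / \parens{3 + \beta}$, one checks that the hypothesis $\parens{3 + \beta} / 3 < p < 3 + \beta$ is precisely the condition $2 < m < 6$, so that $\phi \in H^1 \hookrightarrow L^m$ and the GN inequality
\[
\norm{\phi}_{L^m}
\leq
C \norm{\phi}_{D^{1, 2}}^\theta \norm{\phi}_{L^2}^{1 - \theta}
\]
holds with $\theta = 3 \parens{1 / 2 - 1 / m} = \parens{3 p - \parens{3 + \beta}} / \parens{2 p} \in \ooi{0, 1}$. Raising to the power $2 p$ and substituting into the HLS bound gives
\[
N \parens{\phi}
\leq
C \norm{\phi}_{D^{1, 2}}^{2 p \theta} \norm{\phi}_{L^2}^{2 p \parens{1 - \theta}}.
\]
Finally I would evaluate the exponents: a direct computation gives $2 p \theta = 3 p - \parens{3 + \beta}$ and $2 p \parens{1 - \theta} = 3 + \beta - p$, which is exactly the claimed inequality.

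The only point demanding care is the exponent bookkeeping, namely verifying that the endpoint restrictions on $p$ coincide with the admissibility ranges of the two inequalities. The lower bound $p > \parens{3 + \beta} / 3$ is what makes $\theta > 0$ (equivalently $m > 2$), while the upper bound $p < 3 + \beta$ is what makes $\theta < 1$ (equivalently $m < 6$, keeping $\phi$ within the Sobolev embedding range); the symmetric HLS exponent $s = 6 / \parens{3 + \beta}$ lies in $\ooi{1, \infty}$ throughout. I do not expect any real analytic obstacle here: the content is entirely the composition of HLS and GN, and the result is indeed a special case of the more general interpolation inequality of \cite{yeMassMinimizersConcentration2016}.
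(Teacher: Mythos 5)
Your proof is correct: the exponent bookkeeping checks out ($s = 6/\parens{3+\beta}$, $m = 6p/\parens{3+\beta} \in \ooi{2,6}$, $2p\theta = 3p - \parens{3+\beta}$, $2p\parens{1-\theta} = 3+\beta-p$), and the composition of HLS with Gagliardo--Nirenberg is precisely the standard argument. The paper itself offers no proof, simply citing the interpolation inequality (1.6) of \cite{yeMassMinimizersConcentration2016}, and your argument is the standard derivation of that cited inequality, so there is nothing to compare beyond noting that you have filled in what the paper delegates to the reference.
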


Let us obtain an analog of the previous inequality for functions in $X$.

\begin{prop}
\label{groundStates:prop:generalizationOfYe}
If $\parens{3 + \beta} / 3 < p < \parens{3 + \beta} / 2$,
then there exists $K > 0$ (which depends on $p$) such that
\[
N \parens{u}
\leq
K
\parens*{
	\norm{\phi}_{D^{1, 2}}^{3 p - \parens{3 + \beta}}
	\norm{\phi}_{L^2}^{3 + \beta - p}
	+
	\norm{\phi}_{D^{1, 2}}^{\frac{3 p - \parens{3 + \beta}}{2}}
	\norm{\phi}_{L^2}^{\frac{3 + \beta - p}{2}}
	\frac{\abs{q}^p}{\lambda^{\frac{3 + \beta - 2 p}{4}}}
	+
	\frac{\abs{q}^{2 p}}{\lambda^{\frac{3 + \beta - 2 p}{2}}}
}
\]
and
\begin{multline*}
N \parens{u}
\leq
K
\left(
	\norm{\phi}_{D^{1, 2}}^{3 p - \parens{3 + \beta}}
	\parens*{
		\norm{u}_{L^2}^{3 + \beta - p}
		+
		\frac{\abs{q}^{3 + \beta - p}}{
			\lambda^{\frac{3 + \beta - p}{4}}
		}
	}
	+
\right.
\\
\left.
+
\norm{\phi}_{D^{1, 2}}^{\frac{3 p - \parens{3 + \beta}}{2}}
\parens*{
	\norm{u}_{L^2}^{\frac{3 + \beta - p}{2}}
	+
	\frac{\abs{q}^{\frac{3 + \beta - p}{2}}}{
		\lambda^{\frac{3 + \beta - p}{8}}
	}
}
\frac{\abs{q}^p}{\lambda^{\frac{3 + \beta - 2 p}{4}}}
+
\frac{\abs{q}^{2 p}}{\lambda^{\frac{3 + \beta - 2 p}{2}}}
\right)
\end{multline*}
for every $u := \phi + q \G_\lambda \in X$.
\end{prop}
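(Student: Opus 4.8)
The plan is to chain three ingredients: the HLS inequality, the generalized GN inequality of Proposition~\ref{groundStates:prop:AdamiProp2.1}, and the elementary subadditivity $\parens{a + b}^s \leq a^s + b^s$ valid for $a, b \geq 0$ and $0 < s \leq 1$. First I would set $r := 6 p / \parens{3 + \beta}$ and note that the hypothesis $\parens{3 + \beta} / 3 < p < \parens{3 + \beta} / 2$ is exactly equivalent to $2 < r < 3$; in particular $X \subset L^r$ and Proposition~\ref{groundStates:prop:AdamiProp2.1} applies with this exponent. Since $\norm{\abs{u}^p}_{L^{6 / \parens{3 + \beta}}} = \norm{u}_{L^r}^p$, the HLS inequality furnishes a constant $C = C \parens{\beta}$ with
\[
N \parens{u}
=
\int \parens*{\parens{I_\beta \ast \abs{u}^p} \abs{u}^p}
\leq
C \norm{u}_{L^r}^{2 p}.
\]
It therefore suffices to bound $\norm{u}_{L^r}^p$.

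The crux is the claim $\norm{u}_{L^r}^p \leq C' \parens{A + B}$, where
\[
A := \norm{\phi}_{D^{1, 2}}^{\frac{3 p - \parens{3 + \beta}}{2}} \norm{\phi}_{L^2}^{\frac{3 + \beta - p}{2}}
\qquad \text{and} \qquad
B := \frac{\abs{q}^p}{\lambda^{\frac{3 + \beta - 2 p}{4}}}.
\]
To prove it, I would raise the first inequality of Proposition~\ref{groundStates:prop:AdamiProp2.1} to the power $p / r = \parens{3 + \beta} / 6$, which lies strictly between $1 / 2$ and $1$ because $0 < \beta < 3$; subadditivity then splits the right-hand side into two summands. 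A direct computation of the resulting exponents (using $r - 2 = \parens{6 p - 2 \parens{3 + \beta}} / \parens{3 + \beta}$, $6 - r = 6 \parens{3 + \beta - p} / \parens{3 + \beta}$ and $3 - r = 3 \parens{3 + \beta - 2 p} / \parens{3 + \beta}$) identifies these two summands with $A$ and $B$, respectively. Squaring the claim and combining with the HLS bound yields $N \parens{u} \leq K \parens{A + B}^2 = K \parens*{A^2 + 2 A B + B^2}$, and $A^2$, $2 A B$, $B^2$ are precisely the three terms of the first asserted inequality.

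For the second inequality I would avoid repeating the estimate and instead feed a cruder bound on $\norm{\phi}_{L^2}$ into the first. The triangle inequality together with $\norm{\G_\lambda}_{L^2} = \parens{8 \pi \sqrt{\lambda}}^{-1 / 2}$ from Lemma~\ref{prelim:lem:Glambda} gives $\norm{\phi}_{L^2} \leq \norm{u}_{L^2} + C \abs{q} / \lambda^{1 / 4}$. Since $3 + \beta - p > 0$, raising this to the powers $3 + \beta - p$ and $\parens{3 + \beta - p} / 2$ and applying subadditivity once more produces
\[
\norm{\phi}_{L^2}^{3 + \beta - p}
\leq
C \parens*{
	\norm{u}_{L^2}^{3 + \beta - p}
	+
	\frac{\abs{q}^{3 + \beta - p}}{\lambda^{\frac{3 + \beta - p}{4}}}
}
\]
and the analogous bound with all exponents halved. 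Substituting both into the first inequality (and leaving its pure-$\abs{q}$ term untouched) reproduces exactly the second inequality.

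The individual steps are routine, so I do not expect a genuine obstacle; the only real work is the exponent bookkeeping in the claim $\norm{u}_{L^r}^p \leq C' \parens{A + B}$, which is where arithmetic slips are most likely. Throughout, one should keep in mind that the exponents $3 p - \parens{3 + \beta}$, $3 + \beta - p$ and $3 + \beta - 2 p$ are all strictly positive under the stated hypotheses, which is what legitimizes every application of subadditivity, and that the resulting constant $K$ depends only on $p$ (through $r$ and the fixed parameter $\beta$).
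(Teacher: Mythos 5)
Your proof is correct, and it takes a genuinely different route from the paper's. The paper decomposes first and estimates second: starting from $N \parens{u} \leq 4^p \parens*{N \parens{\phi} + 2 \abs{q}^p \int \parens*{\parens{I_\beta \ast \abs{\phi}^p} \G_\lambda^p} + \abs{q}^{2p} N \parens{\G_\lambda}}$, it bounds $N \parens{\phi}$ via Lemma \ref{groundStates:lem:Ye1.6}, and then handles the cross term and $N \parens{\G_\lambda}$ by the HLS inequality combined with the GN estimate on $\norm{\phi}_{L^r}$, $r := 6p / \parens{3 + \beta}$, and the explicit scaling of $\norm{\G_\lambda}_{L^r}$ from Lemma \ref{prelim:lem:Glambda}. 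You do the reverse: a single application of HLS to $u$ itself gives $N \parens{u} \leq C \norm{u}_{L^r}^{2p}$, then the first inequality of Proposition \ref{groundStates:prop:AdamiProp2.1} --- applied to the full decomposition $\phi + q \G_\lambda$, not merely to $\phi$ --- raised to the power $p / r = \parens{3 + \beta} / 6 < 1$ yields $\norm{u}_{L^r}^p \leq C' \parens{A + B}$, and expanding $\parens{A + B}^2 = A^2 + 2AB + B^2$ produces the three terms of the first asserted inequality simultaneously; your exponent arithmetic checks out. What your route buys is economy: Lemma \ref{groundStates:lem:Ye1.6} and the explicit $L^r$ norm of $\G_\lambda$ never enter, being implicitly encoded in Proposition \ref{groundStates:prop:AdamiProp2.1}, and the cross term costs nothing since it is just $2AB$; the paper's route instead keeps visible which piece of $u$ generates which term, mirroring the structure of the arguments of Adami, Boni, Carlone \& Tentarelli that this section follows, but yields nothing stronger. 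Both proofs obtain the second inequality identically, from $\norm{\phi}_{L^2} \leq \norm{u}_{L^2} + C \abs{q} \lambda^{-1/4}$. One terminological slip on your side: raising that bound to the powers $3 + \beta - p$ and $\parens{3 + \beta - p} / 2$ is not an application of subadditivity, since $3 + \beta - p > \parens{3 + \beta} / 2 > 1$ under your hypotheses; but because you allow a multiplicative constant, the elementary bound $\parens{a + b}^s \leq 2^s \parens{a^s + b^s}$ --- the paper uses exactly the constant $2^{\parens{3 + \beta - p} / 2}$ --- covers this step, so it is a slip of vocabulary rather than of substance.
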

\begin{proof}
It is clear that
\[
N \parens{u}
\leq
4^p
\parens*{
	N \parens{\phi}
	+
	2
	\abs{q}^p
	\int \parens*{\parens{I_\beta \ast \abs{\phi}^p} \G_\lambda^p}
	+
	\abs{q}^{2 p}
	N \parens{\G_\lambda}
}.
\]
In view of Lemma \ref{groundStates:lem:Ye1.6},
\[
N \parens{\phi}
\leq
K
\norm{\phi}_{D^{1, 2}}^{3 p - \parens{3 + \beta}}
\norm{\phi}_{L^2}^{3 + \beta - p}.
\]
It follows from the HLS Inequality that
\begin{multline*}
2
\abs{q}^p
\int \parens*{\parens{I_\beta \ast \abs{\phi}^p} \G_\lambda^p}
+
\abs{q}^{2 p}
N \parens{\G_\lambda}
\leq
\\
\leq
\underbrace{
	K
	\abs{q}^p
	\parens*{\norm{\phi}_{L^q}^q}^{\frac{3 + \beta}{6}}
	\parens*{\norm{\G_\lambda}_{L^q}^q}^{\frac{3 + \beta}{6}}
}_{\parens{\ast}}
+
\underbrace{
	K
	\abs{q}^{2 p}
	\parens*{\norm{\G_\lambda}_{L^q}^q}^{\frac{3 + \beta}{3}}
}_{\parens{\ast \ast}},
\end{multline*}
where $q := 6 p / \parens{3 + \beta}$. Consider the term
$\parens{\ast}$. In view of Proposition \ref{groundStates:prop:AdamiProp2.1},
\[
\parens{\ast}
\leq
K
\abs{q}^p
\norm{\phi}_{D^{1, 2}}^{\frac{3 p - \parens{3 + \beta}}{2}}
\norm{\phi}_{L^2}^{\frac{3 + \beta - p}{2}}
\parens*{\norm{\G_\lambda}_{L^q}^q}^{\frac{3 + \beta}{6}}.
\]
Considering Lemma \ref{prelim:lem:Glambda}, we have
\[
\parens{\ast}
\leq
\parens{\ast \ast \ast}
:=
K
\norm{\phi}_{D^{1, 2}}^{\frac{3 p - \parens{3 + \beta}}{2}}
\norm{\phi}_{L^2}^{\frac{3 + \beta - p}{2}}
\frac{\abs{q}^p}{\lambda^{\frac{3 + \beta - 2 p}{4}}}
\]
and
\[
\parens{\ast \ast}
=
K
\frac{\abs{q}^{2 p}}{\lambda^{\frac{3 + \beta - 2 p}{2}}}.
\]
As
\[
\norm{\phi}_{L^2}^{\frac{3 + \beta - p}{2}}
\leq
2^{\frac{3 + \beta - p}{2}}
\parens*{
	\norm{u}_{L^2}^{\frac{3 + \beta - p}{2}}
	+
	\frac{\abs{q}^{\frac{3 + \beta - p}{2}}}{
		\lambda^{\frac{3 + \beta - p}{8}}
	}
},
\]
we deduce that
\[
\parens{\ast \ast \ast}
\leq
K
\norm{\phi}_{D^{1, 2}}^{\frac{3 p - \parens{3 + \beta}}{2}}
\parens*{
	\norm{u}_{L^2}^{\frac{3 + \beta - p}{2}}
	+
	\frac{\abs{q}^{\frac{3 + \beta - p}{2}}}{
		\lambda^{\frac{3 + \beta - p}{8}}
	}
}
\frac{\abs{q}^p}{\lambda^{\frac{3 + \beta - 2 p}{4}}}.
\]
\end{proof}

We highlight the inequality obtained for a specific decomposition of functions in $X \setminus H^1$.

\begin{cor}
\label{groundStates:cor:interpolationInequalityDecomposition}
Suppose that $\parens{3 + \beta} / 3 < p < \parens{3 + \beta} / 2$. Given $\eps > 0$, we have
\begin{multline*}
N \parens{u}
\leq
K_\eps
\left(
	\norm{\phi}_{D^{1, 2}}^{3 p - \parens{3 + \beta}}
	\norm{u}_{L^2}^{3 + \beta - p}
	+
	\norm{\phi}_{D^{1, 2}}^{
		\frac{3 p - \parens{3 + \beta}}{2}
	}
	\norm{u}_{L^2}^{\frac{3 + \beta - p}{2}}
	\abs{q}^{3 p - \parens{3 + \beta}}
	+
\right.
\\
\left.
	+
	\norm{u}_{L^2}^{2 \parens{3 + \beta - 2 p}}
	\abs{q}^{2 \parens{3 p - \parens{3 + \beta}}}
\right)
\end{multline*}
for every
\[
u
:=
\phi + q \G_{\eps \abs{q}^4 / \norm{u}_{L^2}^4} \in X \setminus H^1,
\]
where
\[
K_\eps
:=
\max \parens*{
	1 + \frac{1}{\eps^{\frac{3 + \beta - p}{4}}},
	\frac{1}{\eps^{\frac{3 + \beta - 2 p}{4}}}
	\parens*{
		1 + \frac{1}{\eps^{\frac{3 + \beta - p}{8}}}
	},
	\frac{1}{\eps^{\frac{3 + \beta - 2 p}{2}}}
}
K
\]
and $K > 0$ is furnished by the previous proposition.
\end{cor}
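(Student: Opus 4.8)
The plan is to obtain the corollary by specializing the second inequality of Proposition \ref{groundStates:prop:generalizationOfYe} to the frequency $\lambda = \eps \abs{q}^4 / \norm{u}_{L^2}^4$ attached to the decomposition in the hypothesis. The single computation that drives everything is that, for this choice of $\lambda$ and any $s > 0$,
\[
\lambda^s
=
\frac{\eps^s \abs{q}^{4 s}}{\norm{u}_{L^2}^{4 s}},
\]
so each negative power $\lambda^{-s}$ occurring in the proposition becomes $\eps^{-s} \abs{q}^{-4 s} \norm{u}_{L^2}^{4 s}$. Since $\parens{3 + \beta} / 3 < p < \parens{3 + \beta} / 2$ forces $3 + \beta - p$, $3 + \beta - 2 p$ and $3 p - \parens{3 + \beta}$ to be strictly positive, all the exponents of $\lambda$ that we invert are positive and the substitution is legitimate for $u \in X \setminus H^1$ (where $q \neq 0$, so $\lambda > 0$).

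I would then simplify the three groups of the proposition's second inequality one at a time. In the first group, the factor $\abs{q}^{3 + \beta - p} \lambda^{-\parens{3 + \beta - p} / 4}$ collapses to $\eps^{-\parens{3 + \beta - p} / 4} \norm{u}_{L^2}^{3 + \beta - p}$ (the powers of $\abs{q}$ cancel exactly), so that group becomes $\norm{\phi}_{D^{1, 2}}^{3 p - \parens{3 + \beta}} \norm{u}_{L^2}^{3 + \beta - p}$ with coefficient $1 + \eps^{-\parens{3 + \beta - p} / 4}$. In the last group, $\abs{q}^{2 p} \lambda^{-\parens{3 + \beta - 2 p} / 2}$ collapses to $\eps^{-\parens{3 + \beta - 2 p} / 2} \norm{u}_{L^2}^{2 \parens{3 + \beta - 2 p}} \abs{q}^{2 \parens{3 p - \parens{3 + \beta}}}$. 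For the middle group I would first reduce the inner sum $\norm{u}_{L^2}^{\parens{3 + \beta - p} / 2} + \abs{q}^{\parens{3 + \beta - p} / 2} \lambda^{-\parens{3 + \beta - p} / 8}$ to $\norm{u}_{L^2}^{\parens{3 + \beta - p} / 2} \parens*{1 + \eps^{-\parens{3 + \beta - p} / 8}}$, and then multiply by $\abs{q}^p \lambda^{-\parens{3 + \beta - 2 p} / 4} = \eps^{-\parens{3 + \beta - 2 p} / 4} \abs{q}^{3 p - \parens{3 + \beta}} \norm{u}_{L^2}^{3 + \beta - 2 p}$; the $\abs{q}$-exponent comes out as $3 p - \parens{3 + \beta}$ and the coefficient as $\eps^{-\parens{3 + \beta - 2 p} / 4} \parens*{1 + \eps^{-\parens{3 + \beta - p} / 8}}$.

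To finish, I would bound the three coefficients by their maximum, which is exactly the quantity multiplying $K$ in the definition of $K_\eps$ (the three arguments of the maximum being $1 + \eps^{-\parens{3 + \beta - p} / 4}$, then $\eps^{-\parens{3 + \beta - 2 p} / 4} \parens*{1 + \eps^{-\parens{3 + \beta - p} / 8}}$, then $\eps^{-\parens{3 + \beta - 2 p} / 2}$), and factor that maximum out in front of the three monomials. The argument is entirely elementary, so the only real obstacle is the exponent bookkeeping: one must verify that after substitution the powers of $\abs{q}$ and of $\norm{u}_{L^2}$ in each term recombine into precisely the monomials claimed, and that the three $\eps$-dependent prefactors match the three arguments of the maximum. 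The most delicate point is the exponent of $\norm{u}_{L^2}$ in the middle term, where the contribution $\parens{3 + \beta - p} / 2$ from the inner sum must be combined with the contribution $3 + \beta - 2 p$ coming from $\lambda^{-\parens{3 + \beta - 2 p} / 4}$; this is exactly the place where an arithmetic slip is easiest.
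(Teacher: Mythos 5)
Your route is exactly the intended one (the paper states this corollary without proof, as a direct specialization of Proposition~\ref{groundStates:prop:generalizationOfYe} to $\lambda = \eps \abs{q}^4 / \norm{u}_{L^2}^4$), and every individual computation you perform is correct. The problem is the one step you defer: the verification that the resulting monomials ``recombine into precisely the monomials claimed.'' Carried out honestly, that verification \emph{fails} for the middle term. By your own computation, the middle group collapses to
\[
\eps^{-\frac{3 + \beta - 2 p}{4}}
\parens*{1 + \eps^{-\frac{3 + \beta - p}{8}}}
\norm{\phi}_{D^{1, 2}}^{\frac{3 p - \parens{3 + \beta}}{2}}
\norm{u}_{L^2}^{\frac{3 + \beta - p}{2} + \parens{3 + \beta - 2 p}}
\abs{q}^{3 p - \parens{3 + \beta}},
\]
whose exponent of $\norm{u}_{L^2}$ is $\frac{3 + \beta - p}{2} + \parens{3 + \beta - 2 p} = \frac{9 + 3 \beta - 5 p}{2}$, not the $\frac{3 + \beta - p}{2}$ printed in the statement; the two agree only when $p = \parens{3 + \beta} / 2$, which is excluded. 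So the substitution argument does not yield the corollary as printed; it yields a corrected version of it, and your proposal asserts the match without checking it at exactly the spot you yourself flagged as the most delicate.

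The mismatch is in fact a misprint in the paper, which a scaling check exposes: under $u \mapsto t u$ one has $\phi \mapsto t \phi$, $q \mapsto t q$, and the frequency $\eps \abs{q}^4 / \norm{u}_{L^2}^4$ is invariant, so $N \parens{t u} = t^{2 p} N \parens{u}$ forces every monomial on the right-hand side to be homogeneous of degree $2 p$; the first and third are, while the printed middle monomial has degree $4 p - \parens{3 + \beta} < 2 p$. Your (correct) middle monomial has degree $2 p$ and is precisely the geometric mean of the outer two monomials, so by AM--GM it is bounded by half their sum; consequently the printed inequality is still true with $\frac{3}{2} K_\eps$ in place of $K_\eps$ (the printed middle term being nonnegative, it can simply be added), and the only downstream use of the corollary, Lemma~\ref{groundStates:lem:prop3.1}, is unaffected because there $\norm{u}_{L^2}^2 = \mu$ is a constant. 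To complete your proof you should therefore either state and prove the corrected inequality (with exponent $\frac{9 + 3 \beta - 5 p}{2}$), or add the AM--GM step just described to recover the printed statement up to an absolute factor; as written, the final identification step is a genuine gap.
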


Let us recall the necessary and sufficient condition on $p$ for the existence of ground states of \eqref{intro:eqn:Hartree} at any mass
$\mu > 0$ which follows from \cite[Theorem 1.1]{yeMassMinimizersConcentration2016}.

\begin{thm}
\label{groundStates:thm:YeThm1.1}
The Hartree-type equation \eqref{intro:eqn:Hartree} has a ground state at any mass $\mu > 0$ if, and only if,
\begin{equation}
\label{groundStates:eqn:conditionYe}
\frac{3 + \beta}{3} < p < \frac{5 + \beta}{3}.
\end{equation}
Furthermore, if \eqref{groundStates:eqn:conditionYe} holds, then
$- \infty < \E^0 \parens{\mu} < 0$ for every $\mu > 0$.
\end{thm}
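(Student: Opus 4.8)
The plan is to treat this as a standard $L^2$-constrained minimization for $E^0$ on $H^1_\mu$, following \cite{yeMassMinimizersConcentration2016}. The whole dichotomy is governed by the behaviour of the two terms of $E^0$ under the mass-preserving dilation $\phi_t \parens{x} := t^{3 / 2} \phi \parens{t x}$, for which $\norm{\phi_t}_{L^2} = \norm{\phi}_{L^2}$, $\norm{\phi_t}_{D^{1, 2}}^2 = t^2 \norm{\phi}_{D^{1, 2}}^2$ and $N \parens{\phi_t} = t^{3 p - \parens{3 + \beta}} N \parens{\phi}$. Comparing the exponent $3 p - \parens{3 + \beta}$ with the gradient exponent $2$ shows that the problem is $L^2$-subcritical exactly when $p < \parens{5 + \beta} / 3$, while the lower bound $p > \parens{3 + \beta} / 3$ guarantees $3 p - \parens{3 + \beta} > 0$. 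These two facts drive both directions.

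For the existence direction, assume \eqref{groundStates:eqn:conditionYe}. First I would establish $- \infty < \E^0 \parens{\mu} < 0$. Boundedness from below and coercivity on $H^1_\mu$ follow from Lemma \ref{groundStates:lem:Ye1.6}, since $E^0 \parens{\phi} \geq \frac{1}{2} \norm{\phi}_{D^{1, 2}}^2 - \frac{K}{2 p} \mu^{\frac{3 + \beta - p}{2}} \norm{\phi}_{D^{1, 2}}^{3 p - \parens{3 + \beta}}$ and the gradient power $2$ beats $3 p - \parens{3 + \beta} < 2$ as $\norm{\phi}_{D^{1, 2}} \to \infty$; testing $E^0$ along $\phi_t$ and letting $t \to 0$ gives $\E^0 \parens{\mu} \leq E^0 \parens{\phi_t} < 0$, because the negative term carries the smaller power of $t$.

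The compactness of minimizing sequences is the crux, and I would obtain it through strict subadditivity of $\mu \mapsto \E^0 \parens{\mu}$. The key sub-step is the strict superhomogeneity $\E^0 \parens{\sigma \mu} < \sigma \E^0 \parens{\mu}$ for every $\sigma > 1$: testing $E^0$ along $\sqrt{\sigma} \phi_n$ for a minimizing sequence $\parens{\phi_n}$ yields $E^0 \parens*{\sqrt{\sigma} \phi_n} = \sigma E^0 \parens{\phi_n} - \frac{\sigma \parens{\sigma^{p - 1} - 1}}{2 p} N \parens{\phi_n}$, and since $E^0 \parens{\phi_n} \to \E^0 \parens{\mu} < 0$ forces $\liminf N \parens{\phi_n} \geq - 2 p \E^0 \parens{\mu} > 0$, the correction term stays bounded away from $0$ in the limit. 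A convex-combination argument (applying this bound with $\sigma = \mu / \theta$ and $\sigma = \mu / \parens{\mu - \theta}$, then combining with weights $\theta / \mu$ and $\parens{\mu - \theta} / \mu$) upgrades it to $\E^0 \parens{\mu} < \E^0 \parens{\theta} + \E^0 \parens{\mu - \theta}$ for $0 < \theta < \mu$. With this in hand, I would run Lions' concentration-compactness on a minimizing sequence, bounded in $H^1$ by coercivity: vanishing is ruled out by $\E^0 \parens{\mu} < 0$, dichotomy by strict subadditivity, so after a translation the sequence is relatively compact, and the weak $H^1$ limit — identified via the HLS and Brézis--Lieb splitting of $N$ — is a minimizer of full mass $\mu$.

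For the nonexistence direction I would argue purely by scaling. If $p > \parens{5 + \beta} / 3$, then $3 p - \parens{3 + \beta} > 2$ and $E^0 \parens{\phi_t} \to - \infty$ as $t \to \infty$, whence $\E^0 \parens{\mu} = - \infty$ and no minimizer exists. If $p = \parens{5 + \beta} / 3$ the two powers of $t$ coincide, and the sharp GN constant produces a critical mass above which $\E^0 \parens{\mu} = - \infty$ and below which $\E^0 \parens{\mu} = 0$ is not attained, so there is no ground state at every mass. Finally, the excluded endpoint $p = \parens{3 + \beta} / 3$ is the sharp-HLS case: by \cite{liebSharpConstantsHardylittlewoodsobolev1983}, $\E^0 \parens{\mu}$ equals the HLS infimum but is approached only by concentrating dilations and is never attained in $H^1_\mu$, since any minimizer would require vanishing Dirichlet energy — exactly as in the proof of Theorem \ref{intro:thm:nonexistence_of_ground_states}. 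The main obstacle throughout is the compactness step, that is, proving strict subadditivity and thereby excluding dichotomy.
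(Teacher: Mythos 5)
The paper does not actually prove this statement: it is recalled verbatim from \cite[Theorem 1.1]{yeMassMinimizersConcentration2016}, so there is no internal proof to compare yours against. What you have written is, in outline, a correct reconstruction of the standard (indeed Ye's) argument: the mass-preserving dilation $\phi_t \parens{x} = t^{3/2} \phi \parens{t x}$ with $N \parens{\phi_t} = t^{3p - \parens{3 + \beta}} N \parens{\phi}$ correctly locates the subcritical window; coercivity and $\E^0 \parens{\mu} > -\infty$ do follow from Lemma \ref{groundStates:lem:Ye1.6} since $3p - \parens{3 + \beta} < 2$; your superhomogeneity computation is right ($N \parens{\sqrt{\sigma} \phi} = \sigma^p N \parens{\phi}$, and $\E^0 \parens{\mu} < 0$ forces $\liminf_n N \parens{\phi_n} \geq -2p \, \E^0 \parens{\mu} > 0$, so the correction term survives the limit), and the convex-combination upgrade to strict subadditivity plus Lions' concentration-compactness is the standard closing move. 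Your treatment of the endpoints $p = \parens{5 + \beta}/3$ (critical mass via the sharp GN-type constant) and $p = \parens{3 + \beta}/3$ (sharp HLS forces vanishing Dirichlet energy, as in the paper's proof of Theorem \ref{intro:thm:nonexistence_of_ground_states}) is also sound.

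There is one concrete omission in the ``only if'' direction. The paper's standing hypotheses are $p > 1$ and $0 < \beta < 3$, and $\parens{3 + \beta}/3 = 1 + \beta/3 > 1$, so the range $1 < p < \parens{3 + \beta}/3$ is nonempty and must be excluded too; your case analysis covers only $p = \parens{3 + \beta}/3$, $p = \parens{5 + \beta}/3$ and $p > \parens{5 + \beta}/3$. The fix costs one line with the scaling you already set up: for $p < \parens{3 + \beta}/3$ one has $3p - \parens{3 + \beta} < 0$, so taking any fixed $\phi \in H^1_\mu$ with $0 < N \parens{\phi} < \infty$ (e.g.\ Schwartz), $E^0 \parens{\phi_t} \to -\infty$ as $t \to 0$, whence $\E^0 \parens{\mu} = -\infty$ and no ground state exists. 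Two minor points of care: at $p = \parens{5 + \beta}/3$ the infimum $\E^0 \parens{\mu^*} = 0$ may in fact be attained at the single critical mass $\mu^*$ (the optimizer has zero energy), which is harmless since the theorem demands ground states at \emph{every} $\mu > 0$, but your phrasing should not suggest nonattainment at all masses; and in the vanishing step you should note explicitly that $N \parens{\phi_n} \to 0$ because HLS controls $N$ by $\norm{\phi_n}_{L^{6p/\parens{3+\beta}}}^{2p}$ with $2 < 6p/\parens{3 + \beta} < 6$ under \eqref{groundStates:eqn:conditionYe}, so Lions' lemma applies.
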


In fact, ground states of \eqref{intro:eqn:Hartree} do not not vanish anywhere in $\real^3$.

\begin{lem}
\label{groundStates:lem:|S_mu|isPositive}
Suppose that $\parens{3 + \beta} / 3 < p < \parens{5 + \beta} / 3$ and $S$ denotes a ground state of the Hartree-type equation \eqref{intro:eqn:Hartree} at mass $\mu > 0$. We conclude that
$\abs{S}$ is of class $C^2$ and does not vanish in $\real^3$.
\end{lem}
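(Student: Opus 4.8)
The plan is to reduce to the case of a nonnegative ground state, establish enough regularity through an elliptic bootstrap, pin down the sign of the Lagrange multiplier via the Pohožaev and Nehari identities, and finally invoke the strong maximum principle. First I would replace $S$ by $\abs{S}$: since $N$ depends only on $\abs{S}$, since $\norm{\abs{S}}_{L^2} = \norm{S}_{L^2}$, and since the diamagnetic inequality $\abs{\nabla \abs{S}} \leq \abs{\nabla S}$ yields $\norm{\abs{S}}_{D^{1, 2}} \leq \norm{S}_{D^{1, 2}}$, we have $E^0 \parens{\abs{S}} \leq E^0 \parens{S} = \E^0 \parens{\mu}$. As $\abs{S} \in H^1_\mu$, it follows that $w := \abs{S} \geq 0$ is itself a ground state, so the Euler--Lagrange equation furnishes a Lagrange multiplier $\omega$ with
\[
- \Delta w + \omega w = \parens*{I_\beta \ast w^p} w^{p - 1}
\quad \text{in} \quad \real^3
\]
in the weak sense, the right-hand side being nonnegative.

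Next I would run a standard elliptic bootstrap as in \cite[Section 4]{morozGroundstatesNonlinearChoquard2013}. Starting from $w \in H^1$, the HLS Inequality controls $I_\beta \ast w^p$ in appropriate Lebesgue spaces, so the right-hand side lies in $L^s_{\loc}$ for increasingly large $s$; $W^{2, s}$-elliptic regularity and Sobolev embeddings then give $w \in C^{1, \gamma}_{\loc}$, after which $\parens{I_\beta \ast w^p} w^{p - 1}$ is locally Hölder continuous and Schauder estimates upgrade this to $w \in C^{2, \gamma}_{\loc}$; in particular, $\abs{S} = w$ is of class $C^2$. Combined with the decay of ground states, this also supplies the integrability needed to apply Proposition \ref{intro:prop:PohozaevHartree} to $w$.

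I would then fix the sign of $\omega$. Eliminating $N \parens{w}$ between the Nehari identity $\norm{w}_{D^{1, 2}}^2 + \omega \norm{w}_{L^2}^2 = N \parens{w}$ and the Pohožaev identity \eqref{intro:eqn:PohozaevHartree} gives
\[
\parens*{3 p - \parens{3 + \beta}} \omega \norm{w}_{L^2}^2
=
\parens*{3 + \beta - p} \norm{w}_{D^{1, 2}}^2.
\]
Under \eqref{groundStates:eqn:conditionYe}, both $3 p - \parens{3 + \beta}$ and $3 + \beta - p$ are strictly positive and $\norm{w}_{D^{1, 2}} > 0$ since $w \not\equiv 0$; hence $\omega > 0$.

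Finally, as $w \geq 0$ is of class $C^2$, solves $- \Delta w + \omega w = \parens{I_\beta \ast w^p} w^{p - 1} \geq 0$ with $\omega > 0$, and is not identically zero, the strong maximum principle for $- \Delta + \omega$ forces $w > 0$ everywhere: were $w$ to vanish somewhere, that point would be an interior minimum of value $0$, and the strong minimum principle would make $w$ identically zero, a contradiction. The main obstacle I anticipate is the bootstrap, where one must track the HLS exponents across the full range \eqref{groundStates:eqn:conditionYe} to reach Hölder continuity of the nonlinearity and thus $C^2$ regularity; the computation of $\omega$ and the maximum principle argument are then routine.
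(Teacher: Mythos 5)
Your proposal is correct, and its skeleton (pass to $\abs{S}$ via the diamagnetic inequality, obtain regularity, show the Lagrange multiplier is positive, conclude with the strong maximum principle) matches the paper's. The genuine difference is in how you prove $\omega > 0$. The paper never touches the Pohožaev identity here: it uses Theorem \ref{groundStates:thm:YeThm1.1} to get $E^0 \parens{S} = \E^0 \parens{\mu} < 0$, i.e. $\norm{S}_{D^{1,2}}^2 < \frac{1}{p} N \parens{S}$, and then the Nehari identity alone yields
\[
\omega
=
\frac{N \parens{S} - \norm{S}_{D^{1,2}}^2}{\norm{S}_{L^2}^2}
>
\parens*{1 - \frac{1}{p}} \frac{N \parens{S}}{\norm{S}_{L^2}^2}
>
0,
\]
an argument that lives entirely at the $H^1$ level, since the Nehari identity only requires testing the weak equation against $S$ itself. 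With $\omega > 0$ in hand, the paper rescales the frequency to $1$ and quotes the regularity discussion of the cited guide to get $C^2$, then applies the maximum principle. You instead run the (sign-agnostic) bootstrap first and then eliminate $N \parens{w}$ between the Nehari identity and \eqref{intro:eqn:PohozaevHartree} to get $\parens{3 p - \parens{3 + \beta}} \omega \norm{w}_{L^2}^2 = \parens{3 + \beta - p} \norm{w}_{D^{1,2}}^2$; your algebra and sign analysis are correct. What the paper's route buys is economy: it completely avoids Proposition \ref{intro:prop:PohozaevHartree} and its regularity hypotheses, exploiting instead the strict negativity of the ground-state energy, which is already available from Theorem \ref{groundStates:thm:YeThm1.1}.

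That is also where the one soft spot of your argument sits. Proposition \ref{intro:prop:PohozaevHartree} requires $w \in H^2_{\loc} \cap W^{1, 6p / \parens{3 + \beta}}$, and you justify the global part by appealing to ``the decay of ground states.'' Decay estimates are normally derived \emph{after} one knows $\omega > 0$ (and positivity), which is precisely what you are trying to prove, so as phrased this is borderline circular. What you actually need is global, not just local, regularity: a Brezis--Kato/Moser iteration giving $w \in L^2 \cap L^\infty$, followed by Calderón--Zygmund estimates on all of $\real^3$ giving $w \in W^{2, s}$ for the relevant exponents, all of which is insensitive to the sign of $\omega$ and is contained in the reference you already cite for the bootstrap. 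So the step is reparable with tools you point to, but it should be justified by global elliptic estimates rather than by decay.
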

\begin{proof}
It is clear that $\abs{S}$ is also a minimizer of
$E^0|_{H^1_\mu}$. As
\[
E^0 \parens{S}
=
\frac{1}{2} \norm{S}_{D^{1, 2}}^2 - \frac{1}{2 p} N \parens{S}
=
\E^0 \parens{\mu} < 0,
\]
it follows from the Nehari identity
$\norm{S}_{D^{1, 2}}^2 + \omega \norm{S}_{L^2}^2 = N \parens{S}$
that
\[
\omega
=
\frac{1}{\norm{S}_{L^2}^2} \parens*{N \parens{S} - \norm{S}_{D^{1, 2}}^2}
> 0.
\]
As such, the discussion in \cite[Section 3.3.1]{morozGuideChoquardEquation2017} shows that $\abs{S}$ is of class $C^2$. To finish, the inequality $\abs{S} > 0$ in $\real^3$ follows from the Strong Maximum Principle.
\end{proof}

The previous results imply $\E \parens{\mu} > - \infty$.

\begin{lem}
\label{groundStates:lem:prop3.1}
If
\[
\frac{3 + \beta}{3}
<
p
<
\min \parens*{\frac{5 + \beta}{3}, \frac{3 + \beta}{2}},
\]
then $\E \parens{\mu} > - \infty$ for every $\mu > 0$.
\end{lem}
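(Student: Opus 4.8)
The plan is to bound $E$ from below separately on the two pieces of the constraint sphere, according to whether the charge vanishes. Split $X_\mu = H^1_\mu \cup \parens{X_\mu \setminus H^1_\mu}$. On the first piece every $u = \phi$ has $q = 0$, so $Q \parens{u} = \norm{\phi}_{D^{1, 2}}^2$ and therefore $E \parens{u} = E^0 \parens{u} \geq \E^0 \parens{\mu}$. Since the hypothesis includes $p < \parens{5 + \beta} / 3$, Theorem \ref{groundStates:thm:YeThm1.1} gives $\E^0 \parens{\mu} > - \infty$, which settles the zero-charge part.

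The substantial work is the lower bound on $X_\mu \setminus H^1_\mu$, and I would obtain it by exploiting the special decomposition of Remark \ref{prelim:rmk:the_decomposition}. Writing each such $u$ as $u = \phi + q \G_{\eps \abs{q}^4 / \mu^2}$ with $\eps = 1 / \parens{64 \pi^2}$, that remark expresses $\frac{1}{2} Q \parens{u}$ as a sum of a positive multiple of $\norm{\phi}_{D^{1, 2}}^2$, a positive multiple of $\abs{q}^4$ (using $\norm{u}_{L^2}^2 = \mu$), a further nonnegative term, and the term $\frac{\alpha}{4} \abs{q}^2$. For the very same decomposition, Corollary \ref{groundStates:cor:interpolationInequalityDecomposition} bounds $N \parens{u}$ by three contributions, proportional respectively to $\norm{\phi}_{D^{1, 2}}^{3 p - \parens{3 + \beta}}$, to $\norm{\phi}_{D^{1, 2}}^{\parens{3 p - \parens{3 + \beta}} / 2} \abs{q}^{3 p - \parens{3 + \beta}}$, and to $\abs{q}^{2 \parens{3 p - \parens{3 + \beta}}}$, each multiplied by an explicit power of $\mu$. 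Inserting these into $E \parens{u} = \frac{1}{2} Q \parens{u} - \frac{1}{2 p} N \parens{u}$, the goal is to show that each negative contribution, together with $\frac{\alpha}{4} \abs{q}^2$, is dominated up to an additive constant by the two genuinely positive quantities $\norm{\phi}_{D^{1, 2}}^2$ and $\abs{q}^4$.

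This reduces to a comparison of exponents through Young's inequality, and every inequality needed turns out to follow from $p < \parens{5 + \beta} / 3$: the first term has exponent $3 p - \parens{3 + \beta} < 2$ on $\norm{\phi}_{D^{1, 2}}$; the third has exponent $2 \parens{3 p - \parens{3 + \beta}} < 4$ on $\abs{q}$; and for the mixed term the balance $\frac{3 p - \parens{3 + \beta}}{4} + \frac{3 p - \parens{3 + \beta}}{4} = \frac{3 p - \parens{3 + \beta}}{2} < 1$ holds for the same reason, so a three-term Young inequality absorbs it into $\norm{\phi}_{D^{1, 2}}^2$ and $\abs{q}^4$. The term $\frac{\alpha}{4} \abs{q}^2$, regardless of the sign of $\alpha$, is absorbed into $\abs{q}^4$ up to a constant because its exponent $2$ lies below $4$; the complementary hypothesis $p < \parens{3 + \beta} / 2$ serves only to keep $N$ finite and the powers of $\mu$ nonnegative. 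Choosing the Young parameters small relative to the fixed positive coefficients $\frac{1}{4}$ and $\parens{256 \pi^2 \mu}^{-1}$, I obtain $E \parens{u} \geq - C \parens{\mu}$ uniformly on $X_\mu \setminus H^1_\mu$, whence $\E \parens{\mu} \geq \min \parens{\E^0 \parens{\mu}, - C \parens{\mu}} > - \infty$. I expect the main obstacle to be nothing conceptual but rather the careful bookkeeping of these absorptions and of the $\mu$-dependent constants; once the exponent inequalities above are recorded, the rest is routine.
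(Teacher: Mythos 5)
Your proposal is correct and follows essentially the same route as the paper's proof: reduce to $X_\mu \setminus H^1_\mu$ via Theorem \ref{groundStates:thm:YeThm1.1}, then combine the decomposition of Remark \ref{prelim:rmk:the_decomposition} (with $\eps = 1/\parens{64\pi^2}$) with Corollary \ref{groundStates:cor:interpolationInequalityDecomposition}, and conclude from the exponent condition $0 < 3p - \parens{3+\beta} < 2$. The only difference is presentational: you spell out the Young-inequality absorptions (including that of $\frac{\alpha}{4}\abs{q}^2$ into the $\abs{q}^4$ term) which the paper compresses into the single remark that the resulting lower bound is bounded below.
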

\begin{proof}
In view of Theorem \ref{groundStates:thm:YeThm1.1}, we only have to prove that
$\inf_{u \in X_\mu \setminus H^1_\mu} \E \parens{u} > - \infty$. Consider a
\[
u
:=
\phi + q \G_{\eps \abs{q}^4 / \norm{u}_{L^2}^4}
\in X_\mu \setminus H^1_\mu,
\]
where $\eps := 1 / \parens{64 \pi^2}$. Due to Remark \ref{prelim:rmk:the_decomposition} and Corollary \ref{groundStates:cor:interpolationInequalityDecomposition},
\begin{multline*}
E \parens{u}
\geq
\frac{1}{2} \norm{\phi}_{D^{1, 2}}^2
+
\frac{\abs{q}^4}{128 \pi^2 \mu} \parens*{
	1 + \frac{\norm{\phi}_{L^2}^2}{\mu}
}
+
\frac{\alpha}{2} \abs{q}^2
-
\\
-
K_\eps
\norm{\phi}_{D^{1, 2}}^{3 p - \parens{3 + \beta}}
\mu^{\frac{3 + \beta - p}{2}}
-
\\
-
K_\eps
\norm{\phi}_{D^{1, 2}}^{\frac{3 p - \parens{3 + \beta}}{2}}
\mu^{\frac{3 + \beta - p}{4}}
\abs{q}^{3 p - \parens{3 + \beta}}
-
K_\eps
\mu^{3 + \beta - 2 p}
\abs{q}^{2 \parens{3 p - \parens{3 + \beta}}},
\end{multline*}
which is bounded below because $0 < 3 p - \parens{3 + \beta} < 2$.
\end{proof}

We proceed to a comparison of $\E \parens{\mu}$ with
$\E^0 \parens{\mu}$.

\begin{lem}
\label{groundStates:lem:AdamiProp3.2}
If
\[
\frac{3 + \beta}{3}
<
p
<
\min \parens*{
	\frac{5 + \beta}{3}, \frac{5 + 2 \beta}{4}
},
\]
then $\E \parens{\mu} < \E^0 \parens{\mu} < 0$ for every $\mu > 0$.
\end{lem}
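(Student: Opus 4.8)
The plan is to reduce the statement to the single strict inequality $\E \parens{\mu} < \E^0 \parens{\mu}$. Since $H^1_\mu \subset X_\mu$ and $E$ restricted to $H^1$ (where the charge vanishes) coincides with $E^0$, we immediately obtain $\E \parens{\mu} \leq \E^0 \parens{\mu}$; the bound $\E^0 \parens{\mu} < 0$ is furnished by Theorem \ref{groundStates:thm:YeThm1.1}, whose hypothesis is implied by ours. It therefore remains to exhibit a single competitor in $X_\mu \setminus H^1_\mu$ whose energy lies strictly below $\E^0 \parens{\mu}$.

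To this end, I would take a ground state $S$ of \eqref{intro:eqn:Hartree} at mass $\mu$, which exists by Theorem \ref{groundStates:thm:YeThm1.1}; replacing $S$ by $\abs{S}$ if necessary, Lemma \ref{groundStates:lem:|S_mu|isPositive} lets me assume that $S$ is real, of class $C^2$ and strictly positive, so that in particular $S \parens{0} > 0$. Fixing $\lambda > 0$, I would consider for small $q > 0$ the mass-normalized perturbation
$u_q := \sqrt{\mu / \norm{S + q \G_\lambda}_{L^2}^2} \parens{S + q \G_\lambda} \in X_\mu$.
Since the coefficient of $\G_\lambda$ in $u_q$ is $\sim q \neq 0$, we have $u_q \in X_\mu \setminus H^1_\mu$, as required by the statement.

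The heart of the proof is the first-order expansion $E \parens{u_q} = E^0 \parens{S} - q S \parens{0} + O \parens{q^2}$ as $q \to 0^+$. I would obtain it by expanding $Q \parens{u_q}$ directly from its definition (using $\norm{\G_\lambda}_{L^2}^2 = 1 / \parens{8 \pi \sqrt{\lambda}}$ from Lemma \ref{prelim:lem:Glambda}) together with the first-order expansion of $N \parens{u_q}$. Writing $I := \int S \G_\lambda > 0$, the mass normalization contributes a term $- \frac{I}{\mu} S$ to the first-order variation, while the linear response of $N$ involves $\int \parens{I_\beta \ast S^p} S^{p - 1} \G_\lambda$. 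The two decisive identities are the Euler--Lagrange equation $- \Delta S + \omega S = \parens{I_\beta \ast S^p} S^{p - 1}$ tested against $\G_\lambda$ --- which, via $\parens{- \Delta + \lambda} \G_\lambda = \delta_0$, yields $\int \parens{I_\beta \ast S^p} S^{p - 1} \G_\lambda = S \parens{0} + \parens{\omega - \lambda} I$ --- and the Nehari identity $\norm{S}_{D^{1, 2}}^2 + \omega \mu = N \parens{S}$. Substituting these, all the terms carrying $\lambda$, $\omega$, $I$ and $N \parens{S}$ cancel, leaving precisely the slope $- S \parens{0}$. I would emphasize that the $\alpha$-dependent part of $Q$ enters only at order $q^2$, which is exactly why the conclusion is insensitive to the sign of $\alpha$.

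Since $S \parens{0} > 0$, the expansion gives $E \parens{u_q} < E^0 \parens{S} = \E^0 \parens{\mu}$ for all sufficiently small $q > 0$, whence $\E \parens{\mu} < \E^0 \parens{\mu}$ and the lemma follows. The main obstacle is rigor in the first-order expansion: justifying that $N$ is differentiable along the direction $\G_\lambda$ (using the HLS-type estimates behind Proposition \ref{groundStates:prop:generalizationOfYe} together with the integrability $\G_\lambda \in L^{6 p / \parens{3 + \beta}}$, guaranteed since $\parens{3 + \beta}/3 < p < \parens{3 + \beta}/2$), and justifying the distributional pairing $\int \parens{- \Delta S} \G_\lambda = S \parens{0} - \lambda I$, which relies on $S$ being $C^2$ near the origin together with its decay. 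Once these points are secured, the algebraic cancellation down to the clean slope $- S \parens{0}$ is the decisive step.
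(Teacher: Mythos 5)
Your proof is correct, but it takes a genuinely different route from the paper's. The paper argues indirectly, with no competitor construction at all: by Theorem \ref{groundStates:thm:YeThm1.1} there is a Hartree ground state $S$ with $E^0(S) = \E^0(\mu) < 0$; if $\E(\mu) = \E^0(\mu)$ held, then $S$ would minimize $E|_{X_\mu}$ (since $E|_{H^1} = E^0$), hence --- under condition \eqref{intro:eqn:conditionForChoquardL2}, which the hypothesis guarantees --- $S$ would be a bound state of \eqref{intro:eqn:RigorousHartreeDelta} with zero charge, and the boundary condition defining $\Dom(\op)$ would force $S(0) = 0$, contradicting the strict positivity of $\abs{S}$ from Lemma \ref{groundStates:lem:|S_mu|isPositive}. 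It is worth noting that both proofs hinge on $S(0)$, but in opposite ways: the paper uses that a minimizer lying in $H^1_\mu$ would have to satisfy $S(0) = 0$ while positivity forbids it; you use $S(0) > 0$ as the strictly negative slope of $q \mapsto E(u_q)$. Your expansion is sound --- the Nehari identity together with the Euler--Lagrange equation tested against $\G_\lambda$ (most cleanly justified via $S = \G_\lambda \ast ((-\Delta + \lambda)S)$ and the evenness of $\G_\lambda$, which gives $\int (I_\beta \ast S^p) S^{p-1} \G_\lambda = S(0) + (\omega - \lambda) \int S \G_\lambda$) does collapse the first-order coefficient to $-S(0)$, with the $\alpha$-dependent term of $Q$ entering only at order $q^2$. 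As for what each approach buys: the paper's argument is a few lines long and avoids any differentiability analysis of the nonlocal term $N$, but it genuinely needs $p < (5 + 2\beta)/4$ so that minimizers are bound states subject to the boundary condition; your construction is quantitative (an explicit trial function whose energy dips below $\E^0(\mu)$ linearly in the charge, insensitive to the sign of $\alpha$), needs only the weaker restriction $p < (3 + \beta)/2$ for the functional setup, and is closer in spirit to the energy-comparison arguments of Adami, Boni, Carlone \& Tentarelli that the paper cites as inspiration for the existence theorem --- at the price of the technical work (the first-order expansion of $N$ along $\G_\lambda$ and the distributional pairing) that you rightly single out as the main obstacles.
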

\begin{proof}
In view of Theorem \ref{groundStates:thm:YeThm1.1}, we have
$\E^0 \parens{\mu} < 0$ and we can let $S \in X_\mu$ denote a ground state of the Hartree-type equation \eqref{intro:eqn:Hartree} at mass
$\mu$.

We claim that $S$ is not a minimizer of $E|_{X_\mu}$. Suppose otherwise. On one hand, $S \parens{0} = 0$ because $S$ is a bound state of \eqref{intro:eqn:HartreeDelta}. On the other hand, $\abs{S}$ is strictly positive due to Lemma \ref{groundStates:lem:|S_mu|isPositive}. Contradiction.

It follows from the previous paragraph that $E|_{X_\mu}$ does not have a minimizer in $H^1_\mu$, so $\E \parens{\mu} < \E^0 \parens{\mu}$ because $E|_{H^1} = E^0$.
\end{proof}

Let us show that, up to discarding a finite number of indices, minimizing sequences of $E|_{X_\mu}$ are bounded away from $H^1$.

\begin{lem}
\label{groundStates:lem:AdamiLem3.4}
Suppose that
\[
\frac{3 + \beta}{3}
<
p
<
\min \parens*{
	\frac{5 + \beta}{3}, \frac{5 + 2 \beta}{4}
},
\]
$\mu > 0$ and
$\parens{u_n := \phi_{\lambda, n} + q_n \G_\lambda}_{n \in \nat}$ is a minimizing sequence for $E|_{X_\mu}$. We conclude that there exist
$C > 0$ and $\bar{n} \in \nat$ such that $\abs{q_n} > C$ for every
$n \geq \bar{n}$.
\end{lem}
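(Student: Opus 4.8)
The plan is to argue by contradiction, exploiting the strict inequality $\E\parens{\mu} < \E^0\parens{\mu}$ furnished by Lemma \ref{groundStates:lem:AdamiProp3.2}. Since the asserted conclusion is equivalent to $\liminf_n \abs{q_n} > 0$, I would instead suppose that $\liminf_n \abs{q_n} = 0$ and pass to a subsequence (not relabeled) along which $q_n \to 0$. The objective is then to show that $E\parens{u_n}$ is asymptotically bounded below by the energy of an $H^1_\mu$-function, which forces $\E\parens{\mu} = \lim_n E\parens{u_n} \geq \E^0\parens{\mu}$ and contradicts Lemma \ref{groundStates:lem:AdamiProp3.2}.

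First I would record the a priori boundedness of the minimizing sequence. Recalling that the charge $q_n$ is intrinsic to $u_n$ (independent of the chosen decomposition), I would rewrite each $u_n$ in the $\eps$-decomposition of Remark \ref{prelim:rmk:the_decomposition} with $\eps := 1 / \parens{64 \pi^2}$ and repeat the coercivity estimate from the proof of Lemma \ref{groundStates:lem:prop3.1}: since $E\parens{u_n} \to \E\parens{\mu} > -\infty$ is bounded, Corollary \ref{groundStates:cor:interpolationInequalityDecomposition} together with Remark \ref{prelim:rmk:the_decomposition} and the inequalities $0 < 3 p - \parens{3 + \beta} < 2$ yields that both $\abs{q_n}$ and the $D^{1, 2}$-norm of the corresponding regular part are bounded, whence $Q\parens{u_n}$ is bounded. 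Because $\norm{u_n}_{L^2}^2 = \mu$, $\G_\lambda \in L^2$ and $q_n$ is bounded, the regular part $\phi_{\lambda, n}$ of the fixed-$\lambda$ decomposition is then bounded in $L^2$; and since $Q\parens{u_n}$ is bounded, the explicit formula for $Q$ in the fixed-$\lambda$ decomposition shows that $\norm{\phi_{\lambda, n}}_{D^{1, 2}}$ is bounded as well. Thus $\parens{\phi_{\lambda, n}}_n$ is bounded in $H^1$ and $\parens{q_n}_n$ is bounded.

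With these bounds and $q_n \to 0$, I would show $E\parens{u_n} = E^0\parens{\phi_{\lambda, n}} + o\parens{1}$. For the quadratic part, the explicit formula $Q\parens{u_n} = \norm{\phi_{\lambda, n}}_{D^{1, 2}}^2 + \lambda \parens*{\norm{\phi_{\lambda, n}}_{L^2}^2 - \mu} + \parens*{\alpha + \sqrt{\lambda} / \parens{4 \pi}} \abs{q_n}^2$ combined with $\norm{\phi_{\lambda, n}}_{L^2}^2 - \mu = - 2 \Re \angles{q_n \G_\lambda, \phi_{\lambda, n}}_{L^2} - \abs{q_n}^2 \norm{\G_\lambda}_{L^2}^2 \to 0$ shows $Q\parens{u_n} = \norm{\phi_{\lambda, n}}_{D^{1, 2}}^2 + o\parens{1}$. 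For the nonlocal part, I would use that $N^{1 / 2}$ satisfies a triangle inequality (the HLS functional being associated to a positive-definite bilinear form) together with $u_n - \phi_{\lambda, n} = q_n \G_\lambda \to 0$ in $L^{6 p / \parens{3 + \beta}}$ (the exponent lying in $\coi{2, 3}$ under our hypothesis on $p$, so that $\G_\lambda$ belongs there) and the $H^1$-boundedness of $\phi_{\lambda, n}$, obtaining $N\parens{u_n} = N\parens{\phi_{\lambda, n}} + o\parens{1}$. Finally, setting $\mu_n := \norm{\phi_{\lambda, n}}_{L^2}^2 \to \mu$ and rescaling to $\tilde\phi_n := \sqrt{\mu / \mu_n}\, \phi_{\lambda, n} \in H^1_\mu$, continuity of $E^0$ under this scaling gives $E^0\parens{\tilde\phi_n} = E^0\parens{\phi_{\lambda, n}} + o\parens{1} \geq \E^0\parens{\mu}$. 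Combining, $\E\parens{\mu} = \lim_n E\parens{u_n} \geq \E^0\parens{\mu}$, the desired contradiction.

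The main obstacle I anticipate is the a priori boundedness step: because the mass constraint couples $\norm{\phi_{\lambda, n}}_{L^2}$ with $\abs{q_n}$, boundedness cannot be read off the fixed-$\lambda$ decomposition directly, and it is essential to exploit the fact that the charge is decomposition-independent in order to import the coercivity already established in the $\eps$-decomposition in Lemma \ref{groundStates:lem:prop3.1}. Once the sequence is known to be bounded, the remaining estimates reduce to checking that each error term carries a positive power of $\abs{q_n}$ and vanishes, which is routine.
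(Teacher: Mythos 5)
Your proposal is correct, and its skeleton is the same as the paper's: assume (up to a subsequence) that $q_n \to 0$, show that the sequence is then asymptotically an $H^1_\mu$-minimizing sequence, and contradict the strict inequality $\E \parens{\mu} < \E^0 \parens{\mu}$ of Lemma \ref{groundStates:lem:AdamiProp3.2}; your rescaled functions $\tilde{\phi}_n$ are exactly the paper's $\xi_n$. The one genuine difference is the boundedness step, where your anticipated ``main obstacle'' is in fact not an obstacle: once $q_n \to 0$ is assumed, the coupling caused by the mass constraint disappears on its own, since $\norm{\phi_{\lambda, n}}_{L^2} \leq \norm{u_n}_{L^2} + \abs{q_n} \norm{\G_\lambda}_{L^2} = \sqrt{\mu} + o \parens{1}$, and the paper then feeds this $L^2$-bound and $q_n \to 0$ into the first (fixed-$\lambda$) inequality of Proposition \ref{groundStates:prop:generalizationOfYe}, reading off the $D^{1, 2}$-bound directly from $0 < 3p - \parens{3 + \beta} < 2$, without ever changing decomposition. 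Your detour through Remark \ref{prelim:rmk:the_decomposition} and Corollary \ref{groundStates:cor:interpolationInequalityDecomposition} is also valid and buys something the paper's shortcut does not, namely boundedness of $\parens{\phi_{\lambda, n}}_{n \in \nat}$ in $H^1$ and of $\parens{q_n}_{n \in \nat}$ for an \emph{arbitrary} minimizing sequence (essentially the first assertion of Lemma \ref{groundStates:lem:AdamiLem3.5}), but it requires one precaution you omit: the decomposition $u_n = \phi + q_n \G_{\eps \abs{q_n}^4 / \mu^2}$ exists only when $q_n \neq 0$, so the indices with $q_n = 0$ must be handled separately (trivially: for such $n$ one has $u_n \in H^1_\mu$, hence $E \parens{u_n} = E^0 \parens{u_n} \geq \E^0 \parens{\mu}$, and if this occurs along a subsequence the contradiction with Lemma \ref{groundStates:lem:AdamiProp3.2} is immediate). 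A final caution on phrasing: $u \mapsto N \parens{u}^{1/2}$ is not itself subadditive; what you are really using is that $f \mapsto \parens*{\int \parens{I_\beta \ast f} f}^{1/2}$ is a norm evaluated at $f = \abs{u}^p$, so passing from $N \parens{u_n}$ to $N \parens{\phi_{\lambda, n}}$ still needs the elementary bound $\abs*{\abs{u_n}^p - \abs{\phi_{\lambda, n}}^p} \lesssim \parens*{\abs{u_n}^{p - 1} + \abs{\phi_{\lambda, n}}^{p - 1}} \abs{q_n} \G_\lambda$ together with Hölder, HLS and $\G_\lambda \in L^{6 p / \parens{3 + \beta}}$ --- the ingredients you list do suffice for this.
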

\begin{proof}
By contradiction, suppose that
$\parens{u_n := \phi_{\lambda, n} + q_n \G_\lambda}_{n \in \nat}$ is a minimizing sequence for $E|_{X_\mu}$ and $q_n \to 0$ as $n \to \infty$. In particular, $\parens{\phi_{\lambda, n}}_{n \in \nat}$ is bounded in $L^2$ because $\norm{\phi_{\lambda, n}}_{L^2}^2 \to \mu$ as $n \to \infty$.

We claim that $\parens{\phi_{\lambda, n}}_{n \in \nat}$ is bounded in $D^{1, 2}$. Indeed, $\parens{E \parens{u_n}}_{n \in \nat}$ is bounded, $0 < 3 p - \parens{3 + \beta} < 2$ and Proposition \ref{groundStates:prop:generalizationOfYe} implies
\begin{multline*}
E \parens{u_n}
\geq
\frac{1}{2} \norm{\phi_{\lambda, n}}_{D^{1, 2}}^2
+
\frac{\lambda}{2} \parens*{\norm{\phi_{\lambda, n}}_{L^2}^2 - \mu}
+
\frac{1}{2}
\parens*{\alpha + \frac{\sqrt{\lambda}}{4 \pi}}
\abs{q_n}^2
-
\\
-
K
\left(
	\norm{\phi_{\lambda, n}}_{D^{1, 2}}^{3 p - \parens{3 + \beta}}
	\norm{\phi_{\lambda, n}}_{L^2}^{3 + \beta - p}
	+
\right.
\\
\left.
	+
	\norm{\phi_{\lambda, n}}_{D^{1, 2}}^{
		\frac{3 p - \parens{3 + \beta}}{2}
	}
	\norm{\phi_{\lambda, n}}_{L^2}^{\frac{3 + \beta - p}{2}}
	\frac{\abs{q_n}^p}{\lambda^{\frac{3 + \beta - 2 p}{4}}}
	+
	\frac{\abs{q_n}^{2 p}}{\lambda^{\frac{3 + \beta - 2 p}{2}}}
\right)
\end{multline*}
for every $n \in \nat$, hence the result.

Let
\[
\xi_n
=
\frac{\sqrt{\mu}}{\norm{\phi_{\lambda, n}}_{L^2}}
\phi_{\lambda, n} \in H_\mu^1
\]
for every $n \in \nat$. As $u_n - \xi_n \to 0$ in $L^r$ for any
$r \in \coi{2, 3}$ as $n \to \infty$ and
$E \parens{u_n} \to \E \parens{\mu}$, we deduce that
$E^0 \parens{\xi_n} \to \E \parens{\mu}$, which contradicts Lemma \ref{groundStates:lem:AdamiProp3.2}.
\end{proof}

Consider the following compactness result which is proved with classical arguments.

\begin{lem}
\label{groundStates:lem:AdamiLem3.5}
If
\[
\frac{3 + \beta}{3}
<
p
<
\min \parens*{
	\frac{5 + \beta}{3}, \frac{5 + 2 \beta}{4}
},
\]
$\mu > 0$ and
$\parens{u_n := \phi_{\lambda, n} + q_n \G_\lambda}_{n \in \nat}$ is a minimizing sequence for $E|_{X_\mu}$, then
\begin{itemize}
\item
$\parens{\phi_{\lambda, n}}_{n \in \nat}$ and $\parens{q_n}_{n \in \nat}$ are respectively bounded in $H^1$ and $\complex$;
\item
there exists $u := \phi_\lambda + q \G_\lambda \in X \setminus H^1$ such that, up to subsequence,
\begin{itemize}
\item
$u_n \rightharpoonup u$ in $L^2$,
\item
$u_n \to u$ a.e. in $\real^3$,
\item
$\phi_{\lambda, n} \rightharpoonup \phi_\lambda$ in $H^1$ and
\item
$q_n \to q$ in $\complex$
\end{itemize}
as $n \to \infty$.
\end{itemize}
\end{lem}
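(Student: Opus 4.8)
The plan is to deduce every assertion from the coercivity of $E$ on $X_\mu$, read through the decomposition of Remark \ref{prelim:rmk:the_decomposition}. Fix $\eps := 1 / \parens{64 \pi^2}$. By Lemma \ref{groundStates:lem:AdamiLem3.4} we may discard finitely many indices and assume $\abs{q_n} > C > 0$ for every $n$, so that each $u_n$ lies in $X \setminus H^1$ and thus admits the decomposition $u_n = \tilde{\phi}_n + q_n \G_{\lambda_n}$ with $\lambda_n := \eps \abs{q_n}^4 / \mu^2$; this is the decomposition to which Remark \ref{prelim:rmk:the_decomposition} and Corollary \ref{groundStates:cor:interpolationInequalityDecomposition} apply.

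First I would prove boundedness. Inserting Corollary \ref{groundStates:cor:interpolationInequalityDecomposition} (with $\norm{u_n}_{L^2}^2 = \mu$) into the expression for $Q$ from Remark \ref{prelim:rmk:the_decomposition}, exactly as in the proof of Lemma \ref{groundStates:lem:prop3.1}, produces a lower bound of the shape
\[
E \parens{u_n}
\geq
c \norm{\tilde{\phi}_n}_{D^{1, 2}}^2
+
c \abs{q_n}^4
-
C'
\]
for some constants $c, C' > 0$. Indeed, the leading positive part of $Q$ scales like $\norm{\tilde{\phi}_n}_{D^{1, 2}}^2$ and $\abs{q_n}^4$, while the terms coming from the interpolation inequality scale like $\norm{\tilde{\phi}_n}_{D^{1, 2}}^{3 p - \parens{3 + \beta}}$, $\norm{\tilde{\phi}_n}_{D^{1, 2}}^{\frac{3 p - \parens{3 + \beta}}{2}} \abs{q_n}^{3 p - \parens{3 + \beta}}$ and $\abs{q_n}^{2 \parens{3 p - \parens{3 + \beta}}}$; since $0 < 3 p - \parens{3 + \beta} < 2$, all of them (as well as the lower-order term $\alpha \abs{q_n}^2 / 2$) are absorbed by Young's inequality. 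As $\parens{E \parens{u_n}}_n$ converges to $\E \parens{\mu}$ and is therefore bounded, I conclude that $\parens{\norm{\tilde{\phi}_n}_{D^{1, 2}}}_n$ and $\parens{q_n}_n$ are bounded. The decomposition of Remark \ref{prelim:rmk:the_decomposition} is what makes this work: with a fixed $\lambda$ the quadratic form controls only $\abs{q_n}^2$, which cannot absorb the $\abs{q_n}^{2 p}$ growth of $N$ because $2 p > 2$.

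Next I would transfer these bounds to the fixed-$\lambda$ decomposition demanded by the statement and extract the subsequences. As $\parens{q_n}_n$ is bounded above and below, the $\lambda_n$ range in a compact subset of $\ooi{0, \infty}$, so Lemma \ref{prelim:lem:Glambda} bounds $\norm{\G_{\lambda_n} - \G_\lambda}_{D^{1, 2}}$ uniformly; combined with $\phi_{\lambda, n} - \tilde{\phi}_n = q_n \parens{\G_{\lambda_n} - \G_\lambda}$ this bounds $\parens{\phi_{\lambda, n}}_n$ in $D^{1, 2}$, while $\phi_{\lambda, n} = u_n - q_n \G_\lambda$ together with $\norm{u_n}_{L^2}^2 = \mu$ bounds it in $L^2$. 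Thus $\parens{\phi_{\lambda, n}}_n$ is bounded in $H^1$ and $\parens{q_n}_n$ in $\complex$, which is the first assertion. Weak compactness in the Hilbert spaces $H^1$ and $L^2$ together with Bolzano--Weierstrass in $\complex$ then yield a subsequence along which $\phi_{\lambda, n} \rightharpoonup \phi_\lambda$ in $H^1$, $u_n \rightharpoonup u$ in $L^2$ and $q_n \to q$ in $\complex$; since $q_n \G_\lambda \to q \G_\lambda$ strongly in $L^2$, passing to the weak limit in $u_n = \phi_{\lambda, n} + q_n \G_\lambda$ gives $u = \phi_\lambda + q \G_\lambda$, and $\abs{q} \geq C > 0$ forces $u \in X \setminus H^1$. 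Finally, the compact Sobolev embedding on balls gives $\phi_{\lambda, n} \to \phi_\lambda$ in $L^2_{\loc}$, whence $\phi_{\lambda, n} \to \phi_\lambda$ almost everywhere along a further (diagonal) subsequence, and adding $q_n \G_\lambda \to q \G_\lambda$ yields $u_n \to u$ almost everywhere.

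I expect the boundedness step to be the crux, and in particular the recognition that coercivity is available only for the $\eps \abs{q}^4 / \norm{u}_{L^2}^4$ decomposition, not for a fixed $\lambda$. Here Lemma \ref{groundStates:lem:AdamiLem3.4} plays a double role: its lower bound $\abs{q_n} > C$ both guarantees that the decomposition of Remark \ref{prelim:rmk:the_decomposition} is available for every remaining $n$ and forces the limit charge $q$ to be nonzero, so that $u \in X \setminus H^1$ as claimed.
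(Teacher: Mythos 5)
Your proof is correct and follows essentially the same route as the paper's: the charge lower bound from Lemma \ref{groundStates:lem:AdamiLem3.4}, coercivity read through the decomposition of Remark \ref{prelim:rmk:the_decomposition} exactly as in Lemma \ref{groundStates:lem:prop3.1}, transfer to the fixed-$\lambda$ decomposition via the explicit formulas of Lemma \ref{prelim:lem:Glambda}, and weak compactness plus Rellich--Kondrakov for the limits. The only differences are cosmetic and welcome: you avoid the paper's case distinction on the size of $\lambda$ by observing that the $\lambda_n$ range in a compact subset of $\ooi{0, \infty}$ (so that $\norm{\G_{\lambda_n} - \G_\lambda}_{D^{1,2}}$ is uniformly bounded even if some $\lambda_n$ equals $\lambda$), and you state explicitly that $\abs{q} \geq C > 0$ forces $u \in X \setminus H^1$, a point the paper leaves implicit.
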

\begin{proof}
As $\set{u_n}_{n \in \nat} \subset X_\mu$, there exists
$u \in L^2$ such that, up to subsequence, $u_n \rightharpoonup u$ in $L^2$ as $n \to \infty$. It follows from the previous lemma that there exist $C > 0$ and $\bar{n} > 0$ such that $\abs{q_n} > C$ for every
$n \geq \bar{n}$. In particular, we can associate each $n \in \nat$ to a $\phi_n \in H^1$ such that $u_n = \phi_n + q_n \G_{\nu_n}$, where
$\nu_n := \abs{q_n}^4 / \parens{64 \pi^2 \mu^2}$.

Notice that $\parens{\phi_n}_{n \in \nat}$ is bounded in $L^2$ because
\[
\norm{\phi_n}_{L^2}
\leq
\norm{u_n}_{L^2}
+
\abs{q_n} \norm{\G_{\nu_n}}_{L^2}
=
2 \sqrt{\mu}.
\]
It suffices to argue as in the proof of Lemma \ref{groundStates:lem:prop3.1} to deduce that $\parens{\phi_n}_{n \in \nat}$ and $\parens{q_n}_{n \in \nat}$ are respectively bounded in $D^{1, 2}$ and $\complex$.

Let us show that the lemma holds when
$
\lambda
\geq
1 + \parens{\sup_{n \in \nat} \abs{q_n}} / \parens{8 \pi \mu}^2
$.
Given $n \in \nat$, let
$\phi_{\lambda, n} = \phi_n + q_n \parens{\G_{\nu_n} - \G_\lambda}$,
so that $u_n = \phi_{\lambda, n} + q_n \G_\lambda$. Due to Lemma \ref{prelim:lem:Glambda}, we have
\[
\norm{\phi_{\lambda, n}}_{L^2}^2
\leq
2
\parens*{
	\norm{\phi_n}_{L^2}^2
	+
	\frac{1}{8 \pi}
	\parens*{
		\frac{1}{\sqrt{\lambda}}
		+
		\frac{1}{\sqrt{\nu_n}}
		-
		\frac{4}{\sqrt{\lambda} + \sqrt{\nu_n}}
	}
	\abs{q_n}^2
}
\]
and
\[
\norm{\phi_{\lambda, n}}_{D^{1, 2}}^2
\leq
2
\parens*{
	\norm{\phi_n}_{D^{1, 2}}^2
	+
	\frac{1}{8 \pi}
	\cdot
	\frac{
		3 \lambda \sqrt{\nu_n} - 3 \nu \sqrt{\lambda}
		+
		\nu_n \sqrt{\nu_n} - \lambda \sqrt{\lambda}
	}{\nu_n - \lambda}
	\abs{q_n}^2
},
\]
so $\parens{\phi_{\lambda, n}}_{n \in \nat}$ is bounded in $H^1$. We deduce that, up to subsequence,
$\phi_{\lambda, n} \rightharpoonup \phi_\lambda$ in $H^1$ as
$n \to \infty$. Furthermore, $u = \phi_\lambda + q \G_\lambda$. By the Kondrakov theorem, we have $\phi_{\lambda, n} \to \phi_n$ in $L^r_{loc}$ as $n \to \infty$ for every $2 \leq r < 6$, so $u_n \to u$ a.e. as $n \to \infty$.

A similar argument shows that the lemma also holds when
$
0
<
\lambda
<
1 + \parens{\sup_{n \in \nat} \abs{q_n}} / \parens{8 \pi \mu}^2
$.
\end{proof}

The last preliminary result is that $N$ satisfies the following BL-splitting property that follows from \cite[Lemma 2.4]{morozGroundstatesNonlinearChoquard2013}.

\begin{lem}
\label{groundStates:lem:BLSplittingForN}
Suppose that $1 \leq p < \infty$ and $\parens{u_n}_{n \in \nat}$ is a bounded sequence in $L^{\frac{6 p}{3 + \beta}}$ such that $u_n \to u$ a.e. as $n \to \infty$. We deduce that
\[N \parens{u_n} - N \parens{u_n - u} \to N \parens{u}\]
as $n \to \infty$.
\end{lem}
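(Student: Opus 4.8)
The plan is to exploit the bilinear structure of $N$. Set $s := 6 / \parens{3 + \beta}$ and introduce the symmetric bilinear form (symmetric because $I_\beta$ is even)
\[
B \brackets{f, h} := \int \parens*{\parens{I_\beta \ast f} h},
\]
so that $N \parens{v} = B \brackets{\abs{v}^p, \abs{v}^p}$. The HLS Inequality yields $\abs{B \brackets{f, h}} \leq C \norm{f}_{L^s} \norm{h}_{L^s}$, since $s$ and the order $3 - \beta$ of $I_\beta$ satisfy $2 / s + \parens{3 - \beta} / 3 = 2$; in particular $B$ is continuous on $L^s \times L^s$ and $I_\beta \ast f \in L^{s'}$ whenever $f \in L^s$, where $1 / s' = \parens{3 - \beta} / 6$. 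The hypothesis is boundedness of $\parens{u_n}_{n \in \nat}$ in $L^{p s} = L^{\frac{6 p}{3 + \beta}}$, and Fatou's lemma first gives $u \in L^{p s}$, so that $\abs{u}^p$, $\abs{u_n}^p$ and $\abs{u_n - u}^p$ all lie in $L^s$.

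First I would establish a Brézis--Lieb statement for the densities:
\[
\abs{u_n}^p - \abs{u_n - u}^p
\longrightarrow
\abs{u}^p
\quad \text{in} \quad L^s.
\]
This rests on the elementary inequality that, for each $\eps > 0$, there is $C_\eps > 0$ with $\abs{\abs{x + y}^p - \abs{x}^p} \leq \eps \abs{x}^p + C_\eps \abs{y}^p$ (valid for $1 \leq p < \infty$): applied with $x = u_n - u$ and $y = u$, it controls the a.e.-null sequence $\abs{u_n}^p - \abs{u_n - u}^p - \abs{u}^p$ by $\eps \abs{u_n - u}^p$ plus a term dominated by a fixed multiple of $\abs{u}^p \in L^s$, whence the standard truncation-and-dominated-convergence argument (cf. \cite[Lemma 2.4]{morozGroundstatesNonlinearChoquard2013}) gives the claim upon letting $\eps \to 0$.

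The second step is algebraic. Abbreviating $a_n := \abs{u_n}^p - \abs{u_n - u}^p$ and $h_n := \abs{u_n - u}^p$, so that $\abs{u_n}^p = a_n + h_n$ and, by the first step, $a_n \to \abs{u}^p$ in $L^s$, bilinearity and symmetry of $B$ give
\[
N \parens{u_n} - N \parens{u_n - u}
=
B \brackets{a_n, a_n} + 2 B \brackets{a_n, h_n}.
\]
Continuity of $B$ on $L^s \times L^s$ forces $B \brackets{a_n, a_n} \to B \brackets{\abs{u}^p, \abs{u}^p} = N \parens{u}$. For the cross term I split $a_n = \abs{u}^p + \parens{a_n - \abs{u}^p}$: the contribution $\abs{B \brackets{a_n - \abs{u}^p, h_n}} \leq C \norm{a_n - \abs{u}^p}_{L^s} \norm{h_n}_{L^s} \to 0$ because $\parens{h_n}_{n \in \nat}$ is bounded in $L^s$, while $B \brackets{\abs{u}^p, h_n} = \int \parens{\parens{I_\beta \ast \abs{u}^p} h_n} \to 0$ once we know $h_n \rightharpoonup 0$ in $L^s$, since $I_\beta \ast \abs{u}^p \in L^{s'}$ is a fixed function.

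The main obstacle is exactly this last weak convergence. A bounded sequence in the reflexive space $L^s$ that converges almost everywhere converges weakly to the same limit, and here reflexivity is precisely where the standing hypothesis $\beta < 3$ enters, as it guarantees $s > 1$. Since $h_n = \abs{u_n - u}^p \to 0$ a.e. and $\parens{h_n}_{n \in \nat}$ is bounded in $L^s$, we obtain $h_n \rightharpoonup 0$ in $L^s$. Assembling the three limits yields $N \parens{u_n} - N \parens{u_n - u} \to N \parens{u}$, as desired.
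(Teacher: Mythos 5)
Your proof is correct and coincides with the paper's approach in the only sense available: the paper gives no argument of its own, stating that the lemma ``follows from \cite[Lemma 2.4]{morozGroundstatesNonlinearChoquard2013},'' and your argument---the symmetric bilinear expansion of $N$, the Brézis--Lieb convergence $\abs{u_n}^p - \abs{u_n - u}^p \to \abs{u}^p$ in $L^{6/\parens{3 + \beta}}$, the HLS continuity of the bilinear form, and the weak convergence $\abs{u_n - u}^p \rightharpoonup 0$ obtained from boundedness plus a.e.\ convergence in the reflexive space $L^{6/\parens{3 + \beta}}$---is precisely the proof of that cited lemma, specialized to dimension $3$ and Riesz order $\beta$. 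In effect you have supplied in full, and correctly, the details the paper outsources to the citation.
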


We finally proceed to the proof of the theorem.
\begin{proof}
[Proof of Theorem \ref{intro:thm:existence_of_ground_states}]
Suppose that
$\parens{u_n := \phi_{\lambda, n} + q_n \G_\lambda}_{n \in \nat}$ is a minimizing sequence for $E|_{X_\mu}$. In view of Lemma \ref{groundStates:lem:AdamiLem3.4}, we can suppose that $q_n \neq 0$ for every $n \in \nat$.

Take $u := \phi_\lambda + q \G_\lambda \in X \setminus H^1$ as furnished by Lemma \ref{groundStates:lem:AdamiLem3.5}; let
$m = \norm{u}_{L^2}^2$ and suppose that $u_n \rightharpoonup u$ in $L^2$, $u_n \to u$ a.e. in $\real^3$ as $n \to \infty$, which indeed holds up to subsequence. The norm $\norm{\cdot}_{L^2}$ is weakly lower semicontinuous, so $m \leq \mu$. Lemma \ref{groundStates:lem:AdamiLem3.4} implies $m > 0$, so
$0 < m \leq \mu$.

By contradiction, suppose that $0 < m < \mu$. As
$u_n \rightharpoonup u$ in $L^2$ as $n \to \infty$, we deduce that
$\norm{u_n - u}_{L^2}^2 \to \mu - m > 0$ as $n \to \infty$ and
$\mu / \norm{u_n - u}_{L^2}^2 > 1$ for sufficiently large $n$. Suppose from now on that $n$ is sufficiently large. On one hand,
\begin{align*}
\E \parens{\mu}
&\leq
E \parens*{
	\frac{\sqrt{\mu}}{\norm{u_n - u}_{L^2}} \parens{u_n - u}
};
\\
&\leq
\frac{1}{2} \frac{\mu}{\norm{u_n - u}_{L^2}^2}
Q \parens{u_n - u}
-
\frac{1}{2 p} \frac{\mu^p}{\norm{u_n - u}_{L^2}^{2 p}}
N \parens{u_n - u};
\\
&\leq
\frac{\mu}{\norm{u_n - u}_{L^2}^2}
E \parens{u_n - u}
\end{align*}
because $p > 1$. Therefore,
\[
\frac{\mu - m}{\mu} \E \parens{\mu}
\leq
\liminf_{n \to \infty} E \parens{u_n - u}.
\]
On the other hand, a similar argument shows that
\[
\frac{m}{\mu} \E \parens{\mu}
<
E \parens{u}.
\]
In view of these inequalities, we have
\[
\E \parens{\mu}
<
E \parens{u} + \liminf_{n \to \infty} E \parens{u_n - u}.
\]
It then follows from the BL Lemma and Lemma \ref{groundStates:lem:BLSplittingForN} that
$\E \parens{\mu} < \E \parens{\mu}$, which is a contradiction, so
$m = \mu$.

At this point, we already know that $u_n \to u$ in $L^2$,
$\phi_{\lambda, n} \rightharpoonup \phi_\lambda$ in $H^1$ and $q_n \to q$ in $\complex$ as $n \to \infty$. As such, it follows from Proposition \ref{groundStates:prop:generalizationOfYe} that
$N \parens{u_n - u} \to 0$ as $n \to \infty$. In view of this limit and Lemma \ref{groundStates:lem:BLSplittingForN}, we obtain
$E \parens{u} \leq \E \parens{\mu}$, hence the result.
\end{proof}

\sloppy
\printbibliography
\end{document}